\documentclass[10pt,reqno]{amsart}
\usepackage[margin=0.8in]{geometry}
\usepackage{amsthm, amsmath,amsfonts,amssymb,euscript,hyperref,graphics,color,slashed}
\usepackage{graphicx}
\usepackage{mathrsfs}
\usepackage{comment}
\usepackage{latexsym}
\usepackage[makeroom]{cancel}

\def\inte#1{
\displaystyle\mathop{#1\kern0pt}^\circ }



\let\e=\varepsilon
\let\z=\zeta

\let\f=\frac

\let\p=\psi

\let\D=\Delta

\let\wt=\widetilde



\def\virgp{\raise 2pt\hbox{,}}
\def\cdotpv{\raise 2pt\hbox{;}}

\def\eqdefa{\buildrel\hbox{\footnotesize def}\over =}

\def\C{\mathop{\mathbb C\kern 0pt}\nolimits}
\def\DD{\mathop{\mathbb D\kern 0pt}\nolimits}
\def\EE{\mathop{{\mathbb E \kern 0pt}}\nolimits}
\def\K{\mathop{\mathbb K\kern 0pt}\nolimits}
\def\N{\mathop{\mathbb N\kern 0pt}\nolimits}
\def\Q{\mathop{\mathbb Q\kern 0pt}\nolimits}
\def\R{\mathop{\mathbb R\kern 0pt}\nolimits}
\def\SS{\mathop{\mathbb S\kern 0pt}\nolimits}
\def\ZZ{\mathop{\mathbb Z\kern 0pt}\nolimits}
\def\TT{\mathop{\mathbb T\kern 0pt}\nolimits}
\def\P{\mathop{\mathbb P\kern 0pt}\nolimits}

\newcommand{\Z}{{\ZZ}}

\def\curl{\mathop{\rm curl}\nolimits}

\def\na{\nabla}
\def\p{\partial}

\newcommand{\beq}{\begin{equation}}
\newcommand{\eeq}{\end{equation}}
\newcommand{\ben}{\begin{eqnarray}}
\newcommand{\een}{\end{eqnarray}}
\newcommand{\beno}{\begin{eqnarray*}}
\newcommand{\eeno}{\end{eqnarray*}}


\newtheorem{thm}{Theorem}[section]

\newcommand{\vv}[1]{\boldsymbol{#1}}

\def\div{\text{div}\,}
\def\curl{\text{curl}\,}

\newtheorem*{Main Theorem}{Main Theorem}
\newtheorem{theorem}{Theorem}[section]
\newtheorem{lemma}[theorem]{Lemma}
\newtheorem{proposition}[theorem]{Proposition}

\newtheorem{remark}[theorem]{Remark}

\setlength{\textwidth}{16cm} \setlength{\oddsidemargin}{0cm}
\setlength{\evensidemargin}{0cm}

\numberwithin{equation}{section}

\begin{document}
\title[Well-posedness]{Long time existence for a two-dimensional strongly dispersive Boussinesq system}

\author{Jean-Claude Saut}
\address{Laboratoire de Math\' ematiques, UMR 8628\\
Universit\' e Paris-Saclay, Paris-Sud et CNRS\\ 91405 Orsay, France}
\email{jean-claude.saut@universite-paris-saclay.fr}

\author[Li XU]{Li Xu}
\address{School of Mathematical Sciences, Beihang University\\  100191 Beijing, China}
\email{xuliice@buaa.edu.cn}

\date{}
\maketitle

\vspace{1cm}
 \textit{Abstract}. We prove a long time existence result for the solutions of a two-dimensional Boussinesq system modeling the propagation of long, weakly nonlinear water waves. This system is exceptional in the sense that it is the only
 linearly well-posed system in the (abcd) family of Boussinesq systems whose eigenvalues of the linearized system have nontrivial zeroes. This new difficulty is solved by the use of "good unknowns " and of normal form techniques.

Keywords : Boussinesq systems. Long time existence. Normal forms.

\tableofcontents

\setcounter{equation}{0}
\section{Introduction}
\subsection{The general setting}

The four-parameter (abcd) Boussinesq systems for {\it long wavelength, small amplitude} gravity-capillary  surface water waves introduced in \cite{BCL, BCS1} couples  the elevation of the wave $\zeta=\zeta(x,t)$ to a measure of the horizontal velocity $\vv v=\vv v(x,t), x\in \R^N, N=1,2, t\in \R$ and read as follows:
\beq\label{Bsq 1}\left\{\begin{aligned}
&\p_t\z+\na\cdot\vv v+\epsilon\na\cdot(\z\vv v)+\epsilon\bigl(a\na\cdot\Delta\vv v-b\Delta\p_t\z\bigr)=0,\\
&\p_t\vv v+\na\z+\f{\epsilon}{2}\na(|\vv v|^2)+\epsilon\bigl(c\na\Delta\z-d\Delta\p_t\vv v\bigr)=\vv 0.
\end{aligned}\right.\eeq

Here $a, b, c, d$ are modeling parameters which satisfy the
constraint $a+b+c+d=\frac{1}{3}-\tau $  where $\tau\geq 0$ is a measure of surface tension effects, $\tau=0$ for pure gravity waves.

In \eqref{Bsq 1},  the small parameter $\epsilon$ is defined by
$$\epsilon=a/h\sim (h/\lambda)^2,$$
 where $h$ denotes the mean depth of the fluid, $a$ a typical amplitude of the wave and $\lambda$ a typical horizontal wavelength.

It was established in \cite{BCL} that, in suitable Sobolev classes, the error between the  solutions of the full water waves system and their approximation given by \eqref{Bsq 1} is of order $O(\epsilon^2 t).$ Since the corresponding solutions of the full water wave system have been proven in \cite{AL, La1} to exist on time scales of order $O(1/\epsilon)$, one needs to establish {\it long time existence results} for the Boussinesq systems, the "optimal" existence time scale being $O(1/\epsilon)$. Note that the "dispersive" methods used to prove the local well-posedness of the corresponding Cauchy problems in low order Sobolev spaces lead to time scales of order $O(1/\sqrt \epsilon)$ (see for instance \cite{LPS}).

 The existence of solutions of the Boussinesq systems on time scales of order $O(1/\epsilon)$ has been established  in \cite{ Bu, Bu2, MSZ, SX, SWX} for all the locally-well posed Boussinesq systems except the case $b=d=0, a=c>0$ which is in some sense special since the "generic" case  $b=d=0, a, c>0, a\neq c$ is linearly ill-posed. We also refer to \cite{SX3} for the case of Full-Dispersion Boussinesq systems.

 \begin{remark}
 The global well-posedness of Boussinesq systems has been only established in a few cases, including the one-dimensional  case $a=c=b=0, d>0$ that can be viewed as a dispersive perturbation of the hyperbolic Saint-Venant (shallow water) system, see \cite{A, Sc, MTZ}, and the  Hamiltonian cases $b=d>0, a\leq 0, c<0$, see \cite{BCS2} for the one-dimensional case and \cite{Hu} for the two-dimensional case. We also refer to \cite{KMPP, KM} for scattering results in the energy space for those one-dimensional Hamiltonian systems  when $b=d>0.$
\end{remark}

\vspace{0.3cm}
Recall that the linearization of \eqref{Bsq 1} around the null solution is well-posed (see \cite{BCS1}) provided that
\beq\label{Bsq 2}
a\leq0,\quad c\leq0,\quad b\geq0,\quad d\geq0,
\eeq
\beq\label{Bsq 3}
\text{or}\quad a=c>0,\quad b\geq0,\quad d\geq0.
\eeq

Actually the linear well-posedness occurs when the non zero eigenvalues of the linearization of \eqref{Bsq 1} at $(0, 0)$

$$ \lambda_{\pm}(\xi)=\pm i |\xi|\left(\frac{(1-\epsilon a|\xi|^2)(1-\epsilon c|\xi|^2)}{(1+\epsilon d|\xi|^2)(1+\epsilon b|\xi|^2)}\right)^{\frac{1}{2}}.$$
are purely imaginary.

\vspace{0.3cm}
This paper will focus on  the exceptional case \eqref{Bsq 3} with $b=d=0,\, a=c=1$ which is the only linearly well-posed case with eigenvalues having non trivial zeroes, leading to difficulties not present in the other well-posed systems.

The one-dimensional (1D) case was considered in \cite{SX2} and we will restrict to the two-dimensional(2D) still open case under the physical condition $\curl\vv v=0$, $N=2.$


\vspace{0.3cm}
In this paper, we shall  establish the long time existence theory for the following strongly dispersive (2D) Boussinesq system
\beq\label{Bsq2D}\left\{\begin{aligned}
&\p_t\z+(1+\epsilon\D)\na\cdot\vv v+\epsilon\na\cdot(\z\vv v)=0,\\
&\p_t\vv v+(1+\epsilon\D)\na\z+\f{\epsilon}{2}\na\bigl(|\vv v|^2\bigr)=\vv 0,
\end{aligned}\right.\eeq
with the initial data
\beq\label{initial}
\z|_{t=0}=\z_0,\quad \vv v|_{t=0}=\vv v_0.
\eeq
We shall assume that $\vv v$ is curl-free, i.e.,
\beq\label{curl free condition}
\p_1v^2=\p_2v^1.
\eeq
If \eqref{curl free condition} holds for $t=0$, then it holds for all time $t>0$ in the lifespan of the solutions to the system \eqref{Bsq2D}.

It was established in \cite{LPS}, using various dispersive properties of the underlying linear group (see \cite{KPV1, KPV2, KPV3} in the context of the KdV equation) that the Cauchy problem for \eqref{Bsq2D} is locally well-posed for initial data in $H^s(\R^2)\times H^s(\R^2)^2,\; s>3/2$. However, the corresponding lifespan of the solution is $O(1/\sqrt \epsilon),$ smaller though than the expected  $O(1/\epsilon).$  The fact that purely dispersive methods do not yield the correct lifespan is understandable since  this kind of methods are stable by perturbations which destroy the nonlinear  structure of the system (essentially that of the shallow-water system) which of course plays a crucial role in the long time behavior of the solution.

In \cite{SX2}, by introducing good unknowns(in the sense of Alinhac in \cite{ABZ,AM,Alin}), the authors symmetrized the 1D version of \eqref{Bsq2D} to avoid the loss of derivatives. Using  normal formal techniques on the set away from the spatial resonance set, the authors established the well-posedness of the 1D Boussinesq over the time scalar $\epsilon^{-\f23}$.

The goal of the present paper is to extend the lifespan of the local solution for the 2D Boussinesq \eqref{Bsq2D}.

\subsection{The main result}

We now state the main result of this paper as follows
\begin{thm}\label{main theorem}
Assume that  $(\z_0,\vv v_0)\in H^{N_0}(\R^2)$ for some $N_0\geq 5$ satisfying $\widehat{\z_0}(0)=0,\,\,\widehat{\vv v_0}(0)=\vv 0$ and
\beq\label{initial assumption}
\p_1v_0^2=\p_2v_0^1.
\eeq
 Then there exist a small  $\epsilon_0>0$ and a constant $T_0=T_0(\|\z_0\|_{H^{N_0}}+\|\vv v_0\|_{H^{N_0}})$ such that for any $\epsilon\in(0,\epsilon_0]$, there exists a unique solution $(\z,\vv v)\in C(0,T_0\epsilon^{-\f23};H^{N_0}(\R^2))$ of system \eqref{Bsq2D}-\eqref{initial} such that
\beq\label{energy estimate}
\sup_{t\in[0,T_0\epsilon^{-\f23}]}\bigl(\|\z(t)\|_{H^{N_0}}+\|\vv v(t)\|_{H^{N_0}}\bigr)\leq C\bigl(\|\z_0\|_{H^{N_0}}+\|\vv v_0\|_{H^{N_0}}\bigr),
\eeq
where $C>0$ is a universal constant.
\end{thm}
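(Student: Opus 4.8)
The plan is to run a continuity/bootstrap argument on the top-order energy. The local well-posedness result of \cite{LPS} furnishes a solution of \eqref{Bsq2D}--\eqref{initial} on a maximal interval; the constraints $\widehat{\z}(t,0)=0$, $\widehat{\vv v}(t,0)=\vv 0$ and $\p_1 v^2=\p_2 v^1$ are propagated by the flow, so it suffices to prove the a priori bound \eqref{energy estimate} on any subinterval $[0,T]\subseteq[0,T_0\epsilon^{-2/3}]$ on which the solution lives. As a preliminary reduction I would diagonalize the linear part using the curl-free condition: setting $\mathfrak v:=|\na|^{-1}\na\cdot\vv v$ (legitimate by the mean-zero hypothesis) one has $\na\cdot\vv v=|\na|\mathfrak v$ and $\vv v=-\na|\na|^{-1}\mathfrak v$, and applying $|\na|^{-1}\na\cdot$ to the velocity equation recasts \eqref{Bsq2D} as a $2\times 2$ system for the scalar unknowns $(\z,\mathfrak v)$ with skew-adjoint linear part of symbol $\omega(\xi):=|\xi|(1-\epsilon|\xi|^2)$. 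The combinations $u_\pm:=\z\mp i\mathfrak v$ then satisfy
\beq\label{plan-diag}
\p_t u_\pm\pm i\,\omega(D)\,u_\pm=\epsilon\,\cN_\pm(u,u),
\eeq
where $\cN_\pm$ is quadratic and carries exactly one derivative (coming from $\na\cdot(\z\vv v)$ and $|\na|(|\vv v|^2)$).

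The second step is the Alinhac-type \emph{good unknown}. The sole genuine obstruction to an $H^{N_0}$ estimate is that after applying $\langle D\rangle^{N_0}$ to \eqref{Bsq2D} the quasilinear coefficient $1+\epsilon\z$ multiplying $\na\cdot\vv v$ produces a term that cannot be integrated by parts without losing a derivative. Absorbing this coefficient (equivalently, symmetrizing via a modified unknown $\z+\epsilon(\dots)$) and using the curl-free identity $\tfrac12\na|\vv v|^2=(\vv v\cdot\na)\vv v$ to make the velocity nonlinearity a genuine transport term, one obtains an energy functional $E_{N_0}(t)\simeq\|\z(t)\|_{H^{N_0}}^2+\|\vv v(t)\|_{H^{N_0}}^2$ with
\beq\label{plan-energy}
\frac{d}{dt}E_{N_0}\lesssim\epsilon\,\|(\z,\vv v)\|_{W^{1,\infty}}\,E_{N_0}+\epsilon\,\bigl|\cR_{N_0}\bigr|,
\eeq
where the first term is harmless on the scale $\epsilon^{-1}$ by Sobolev embedding ($N_0\geq5$), and $\cR_{N_0}$ collects the residual quadratic, derivative-carrying interactions that the symmetrization does not absorb.

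The core of the proof is a normal-form correction of the energy associated with \eqref{plan-diag}: a bilinear (paradifferential) functional $Q_{N_0}$ chosen so that $\frac{d}{dt}\bigl(E_{N_0}+\epsilon Q_{N_0}\bigr)$ no longer contains the worst part of $\cR_{N_0}$, which amounts to trading a quadratic term with a derivative for a cubic one at the price of dividing by the resonance function $\Phi_{\pm\pm\pm}(\xi,\eta)=\omega(\xi)\mp\omega(\eta)\mp\omega(\xi-\eta)$. The decisive computation is that, for $\omega(\xi)=|\xi|-\epsilon|\xi|^3$, the ``wave part'' $|\xi|-|\eta|-|\xi-\eta|$ vanishes only on the collinear (spatial resonance) set, on which the cubic correction $-\epsilon\bigl(|\xi|^3-|\eta|^3-|\xi-\eta|^3\bigr)$ is strictly negative of size $\sim\epsilon$ at $O(1)$ frequencies; hence $|\Phi|\gtrsim\epsilon$ on a neighbourhood of the resonant set while $|\Phi|\gtrsim(\text{angle})^2$ off it. I would therefore split frequency space by a cutoff (to be optimized) separating a neighbourhood of the resonant cone from its complement: off the cone the normal form is a near-identity correction, $E_{N_0}+\epsilon Q_{N_0}$ stays equivalent to $E_{N_0}$, and the cubic remainders it generates are time-integrable on $[0,T_0\epsilon^{-2/3}]$; near the cone, where the interactions are essentially the collinear ones already handled by the symmetrization, one argues directly exploiting the small measure of the region. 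Optimizing the cutoff and closing the bootstrap then yields the lifespan $T_0\epsilon^{-2/3}$, in parallel with the one-dimensional result of \cite{SX2}.

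The main obstacle is precisely this last step, where two competing effects must be reconciled simultaneously: the normal-form multiplier behaves like $|\xi|/\Phi$, so the transformation is bounded and invertible on $H^{N_0}$ only after a paradifferential truncation retaining just the derivative-non-losing low--high interactions; and near the resonant set the small divisor is merely $O(\epsilon)$, so there the correction is not of near-identity size and must be paid for by the smallness of the region. The balance between these two facts fixes the optimal cutoff and is responsible for the loss from the conjectural $\epsilon^{-1}$ down to $\epsilon^{-2/3}$. A secondary point, routine but worth recording, is that the degeneracy of the dispersion at $|\xi|\sim\epsilon^{-1/2}$ --- the nontrivial zero of $1+\epsilon\D$ --- never interferes: such frequencies carry only $O(\epsilon^{N_0/2})$ of the $H^{N_0}$ norm, so all estimates may be localized to $|\xi|\lesssim\epsilon^{-1/3}$, a range on which $\omega(\xi)$ is comparable to $|\xi|$.
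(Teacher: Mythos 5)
Your high-level architecture --- curl-free diagonalization, an Alinhac-type good unknown to symmetrize and avoid derivative loss, a normal-form correction, and a splitting of frequency space near/far from the resonant set with a volume argument near it --- is the same as the paper's. The gap is in your analysis of the phases, and it invalidates the decisive step. The dangerous quadratic terms left after symmetrization are the paradifferential low--high interactions $Q^\epsilon_{\mu,-}$, whose phases are $\Phi^\epsilon_{\mu,-}=-\Lambda_\epsilon(\xi)+\mu\Lambda_\epsilon(\xi-\eta)-\Lambda_\epsilon(\eta)$; by Lemma \ref{phase lemma}, on the support of the symbols ($|\xi-\eta|\ll|\eta|\sim|\xi|$, small angle) one has $\Phi^\epsilon_{+,-}\sim|\eta|\,\phi^\epsilon_{+,-}$ with $\phi^\epsilon_{+,-}\approx 4(\epsilon|\eta|^2-1)$, and similarly for $\Phi^\epsilon_{-,-}$. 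Consequently at $O(1)$ frequencies there is \emph{no} small divisor at all ($|\Phi|\gtrsim|\eta|$, so the normal form is a harmless $O(\epsilon)$ correction there), and the genuine space--time resonances occur exclusively at moderate frequencies $|\eta|\sim\epsilon^{-1/2}$, where $\phi^\epsilon_{\mu,-}$ vanishes on a hypersurface and $\Phi$ is arbitrarily small. Your ``decisive computation'' concerns the sign combination whose wave part vanishes on the collinear set --- which corresponds to the $Q^\epsilon_{\mu,+}$ terms that are already $O(\epsilon)$ by \eqref{bound of q eps} and need no normal form --- and your conclusion $|\Phi|\gtrsim\epsilon$ near the resonant set is false for the terms that matter. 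Worse, your closing claim that frequencies $|\xi|\sim\epsilon^{-1/2}$ ``carry only $O(\epsilon^{N_0/2})$ of the $H^{N_0}$ norm'' and that all estimates may be localized to $|\xi|\lesssim\epsilon^{-1/3}$ is wrong: the hypothesis is only $(\z_0,\vv v_0)\in H^{N_0}$, and an $H^{N_0}$ function can have its entire $H^{N_0}$ norm concentrated at frequency $\epsilon^{-1/2}$. What you dismiss as a routine secondary point is precisely the feature (nontrivial zeroes of the eigenvalues) that distinguishes this system and that the whole paper is organized around.

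Even with the resonant set correctly located, your single optimized cutoff separating a neighbourhood of the resonant set from its complement cannot reach the lifespan $\epsilon^{-2/3}$: as the paper's remark at the end of Section 1 points out, using only the phase cutoff and the measure of the near-resonant region gives $|A|\lesssim\sqrt\epsilon\,t$, i.e.\ no improvement over the trivial $O(1/\sqrt\epsilon)$ time. The improvement requires a second decomposition of the near-resonant moderate-frequency region according to the ratio $|\xi-\eta|/|\eta|$ (the parameter $K$), combined with two ingredients absent from your plan: the angularly localized bilinear estimate of Lemma \ref{biliear lemma}, which gains the factor $2^{l/2}$ from near-collinearity, and the change of variables $r_\eta\mapsto\phi^\epsilon_{\mu,-}(\xi,\eta)$ whose Jacobian is shown to be $\sim\sqrt\epsilon$ in \eqref{det of Jacobi}, which converts smallness of the phase into smallness of volume at frequencies $\sim\epsilon^{-1/2}$. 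Only after balancing the two parameters $D$ and $K$ does one obtain $\epsilon^{2/3}t$ and hence the stated lifespan. Relatedly, your assertion $|\Phi|\gtrsim(\mathrm{angle})^2$ off the collinear set is not how the angle enters the argument: the paper uses the smallness of the angle for volume counting in the bilinear estimate, not to lower-bound the phase.
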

\begin{remark}
If $\widehat{\z_0}(0)=0,\,\,\widehat{\vv v_0}(0)=0$, \eqref{Bsq2D} shows that $\widehat{\z}(t,0)=0,\,\,\widehat{\vv v}(t,0)=0$ holds for all time $t>0$ in the lifespan of the solutions to \eqref{Bsq2D}. Therefore, throughout the whole paper, we shall use the condition   $\widehat{\z}(t,0)=0,\,\,\widehat{\vv v}(t,0)=0$ so that we could use the homogenous Littlewood-Paley decompositions.
\end{remark}

\begin{remark}
The curl-free condition \eqref{curl free condition} of the velocity $\vv v$  guarantees that the system \eqref{Bsq2D} could be symmetrized so that there is no loss of derivatives. With \eqref{curl free condition}, the principal part of \eqref{Bsq2D} is similar to the 1D case of \eqref{Bsq2D} in \cite{SX2}.
\end{remark}

\begin{remark}
Reaching the expected time scale $O(1/\epsilon)$ for the solutions of \eqref{Bsq2D} is still an open problem.
\end{remark}

\subsection{Main ideas of the proof}
The main ideas of the proof rely heavily on  symmetrization techniques and  normal form techniques.

Firstly,  to avoid losing derivatives, we introduce a good unknown
\beno
\vv u=\vv v+\epsilon\vv B^\epsilon(\z,\vv v),
\eeno
where $\vv B^\epsilon(\z,\vv v)$ is a nonlocal bilinear term which is defined at the beginning of Section 3.1. Here $\vv u$ is called the good unknown of Alinhac in \cite{ABZ,AM,Alin}. With $(\z,\vv u)$,
we symmetrize \eqref{Bsq2D} to the following dispersive equation
\beq\label{M1}
\p_tV-i\Lambda_\epsilon V=\mathcal{S}_V^\epsilon+\mathcal{Q}_V^\epsilon+O(\epsilon),
\eeq
where $\Lambda_\epsilon=|D|(1+\epsilon\D)$,  and $V=\z+i|D|^{-1}\div\,\vv u$  is the unknown which satisfies
\beno
\|V\|_{H^{N_0}}^2\sim\|\z\|_{H^{N_0}}^2+\|\vv v\|_{H^{N_0}}^2.
\eeno
 In \eqref{M1}, $\mathcal{S}_V^\epsilon$ is the symmetric quadratic term which is of order $O(\epsilon)$, $\mathcal{Q}_V^\epsilon$ is the quadratic term of order $O(\sqrt\epsilon)$, and term $O(\epsilon)$  contains all the remained  nonlinear terms of order $O(\epsilon)$.
The argument in this step is similar to the 1D Boussinesq in \cite{SX2}. A standard energy estimate leads to the well-posedness over time scalar $\f{1}{\sqrt\epsilon}$.

 The difference from the 1D Boussinesq is that there are two extra terms in $\mathcal{Q}_V^\epsilon$, i.e.,
\beno
Q_{+,+}^\epsilon(V^+,V^+),\quad Q_{-,+}^\epsilon(V^-,V^+),
\eeno
 where $V^+=V$ and $V^-=\overline{V}$. Fortunately, the delicate derivation shows that the symbols of these two terms are bounded by $\epsilon|\xi-\eta|$ which is the Fourier multiplier for the low frequency quantity. Then these two terms are also of order $O(\epsilon)$.
 The readers could refer to Section 3 for details.

To improve the bounds on the quadratic terms $Q_{+,-}^\epsilon(V^+,V^-)$ and $Q_{-,-}^\epsilon(V^-,V^-)$, we shall employ  normal forms techniques.  To sketch the main idea, we only consider the simple model
\beq\label{M2}
\p_tV-i\Lambda_\epsilon V=Q_{+,-}^\epsilon(V^+,V^-),
\eeq
where  $Q_{+,-}^\epsilon(V^+,V^-)$ is the quadratic term whose symbol $q_{+,-}^\epsilon(\xi,\eta)$ satisfies
\beq\label{M3}\begin{aligned}
&|q_{+,-}^\epsilon(\xi,\eta)|\lesssim\epsilon|\xi|\varphi_{\leq5}(\sqrt\epsilon|\eta|)\varphi_{\leq-6}\Bigl(\f{|\xi-\eta|}{|\eta|}\Bigr),\\
\text{and}\quad&\text{supp} (q_{+,-}^\epsilon)\subset\{(\xi,\eta)\in\R^2\times\R^2\,|\,\sqrt\epsilon|\eta|\leq64,\,\,\f{31}{32}|\eta|\leq|\xi|\leq\f{33}{32}|\eta|\}.
\end{aligned}\eeq
The definition of the symbol  $q_{+,-}^\epsilon(\xi,\eta)$ and the notation $\varphi_{\leq\cdot}$ can be found in   Subsections 2.1 and 2.2. \eqref{M3} yields that the rough bound of $Q_{+,-}^\epsilon(V^+,V^-)$ is of order $O(\sqrt\epsilon)$ which directly leads to the existence time of \eqref{M2} being of  order $O(\f{1}{\sqrt\epsilon})$.

Since non trivial zeroes of the phase occur in the set of moderate frequencies, instead of employing the normal form transformation directly for $Q_{+,-}^\epsilon(V^+,V^-)$,  we use  suitably modified normal form techniques when the integral regime is far away from the zero sets of the phase.

Assuming that
\beno
\|V(t)\|_{H^{N_0}}=O(1),\quad\text{for any } t\in[0,T_\epsilon],\quad T_\epsilon=O(\epsilon^{-\f23}),
\eeno
we only need to show
\beq\label{M4}
\|V(t)\|_{H^{N_0}}^2\lesssim1+\epsilon^{\f23} t.
\eeq
Then we could obtain the existence time of order $O(\epsilon^{-\f23})$ by a standard continuity argument. Indeed, an energy estimate for \eqref{M2} yields
\beno
\|V(t)\|_{H^{N_0}}^2\leq\|V(0)\|_{H^{N_0}}^2+\f{1}{(4\pi^2)^2}\Bigl|\underbrace{\int_0^t\int_{\R^2\times\R^2}\langle\xi\rangle^{2N_0}
q^\epsilon_{+,-}(\xi,\eta)\widehat{V^+}(\xi-\eta)\widehat{V^-}(\eta)
\overline{\widehat{V^+}(\xi)}d\eta d\xi dt}_{A}\bigr|.
\eeno

Defining the profiles $f$ and $g$ of $V$ and $\langle\na\rangle^{N_0}V$ as follows
\beno
f=e^{-it\Lambda_\epsilon}V\quad\text{and}\quad g=\langle\na\rangle^{N_0}f,
\eeno
we have
\beno
A=\int_0^t\int_{\R^2\times\R^2}\underbrace{e^{it\Phi^\epsilon_{+,-}(\xi,\eta)}\tilde{q}^\epsilon_{+,-}(\xi,\eta)\widehat{f^{+}}(\xi-\eta)\cdot
\widehat{g^-}(\eta)\cdot\widehat{g^-}(-\xi)}_{\mathcal{Q}^\epsilon(\xi,\eta)}d\eta d\xi d\tau,
\eeno
where
\beno
\Phi^\epsilon_{+,-}(\xi,\eta)=-\Lambda_\epsilon(\xi)+\Lambda_\epsilon(\xi-\eta)
-\Lambda_\epsilon(\eta),\quad\tilde{q}^\epsilon_{+,-}(\xi,\eta)=\langle\eta\rangle^{-N_0} \langle\xi\rangle^{N_0} q^\epsilon_{+,-}(\xi,\eta).
\eeno

By the definitions of profiles, we have
\beq\label{M5}
\|V\|_{H^{N_0}}\sim\|f\|_{H^{N_0}}\sim\|g\|_{L^2}\sim1.
\eeq
Thanks to Lemma \ref{phase lemma} and \eqref{M3}, we have
\beq\label{M6}\begin{aligned}
&\Phi^\epsilon_{+,-}(\xi,\eta)\sim|\eta|\phi_{+,-}^\epsilon(\xi,\eta),\quad\text{with }\phi_{+,-}^\epsilon(\xi,\eta)\,\,\text{defined in \eqref{phi + -}},\\
&|\tilde{q}^\epsilon_{+,-}(\xi,\eta)|\lesssim\epsilon|\xi|\varphi_{\leq5}(\sqrt\epsilon|\eta|)\varphi_{\leq-6}\Bigl(\f{|\xi-\eta|}{|\eta|}\Bigr)
\lesssim\sqrt\epsilon.
\end{aligned}\eeq
Moreover, since
\beno
\p_tf=e^{-it\Lambda_\epsilon}Q_{+,-}^\epsilon(V^+,V^-),
\eeno
using \eqref{M5} and \eqref{M6}, we have
\beq\label{M7}
\||D|^{-1}\p_tf\|_{H^{N_0}}=\||D|^{-1}\p_tg\|_{L^2}\lesssim\epsilon.
\eeq

According to the expressions of the phase $\Phi^\epsilon_{+,-}(\xi,\eta)$, generally, we shall divide the integral regime into two cases:  {\it phase far away from the spatial resonance set} and {\it phase near the spatial resonance set}. For the former case, we could use the normal formal techniques, that is integrating by parts  with respect to (w.r.t.) time $t$. While for the latter case, we shall use the smallness of the volume of the integral regime. However,  after similar arguments  as that for the 1D Boussinesq system in \cite{SX2}, we could not improve the existence time scale $\f{1}{\sqrt\epsilon}$. This is  because of the rough estimates over the latter regime, which is caused by the high dimension of the space. Therefore, to improve the existence time scale, we balance the size of symbol $\tilde{q}^\epsilon_{+,-}(\xi,\eta)$ and the volume of the integral regime when the phase near the spatial resonance set. To do so,  we compare the sizes of $|\xi-\eta|$ and $|\eta|$.

\medskip

Precisely, we divide the integral regime into the following three parts:

{\it (1). For low frequencies $\sqrt\epsilon|\eta|\leq\f12$,} there holds
\beno
|\phi^\epsilon_{+,-}(\xi,\eta)|\sim 1\quad\text{and}\quad
|\f{\tilde{q}^\epsilon_{+,-}(\xi,\eta)}{i\Phi^\epsilon_{+,-}(\xi,\eta)}|\lesssim\f{\epsilon}{|\phi^\epsilon_{+,-}(\xi,\eta)|}\lesssim \epsilon.
\eeno
Integrating by parts w.r.t.  $t$ and using \eqref{M5} and \eqref{M7}, we have
\beq\label{M8}
\bigl|\int_0^t\int_{R^2\times\R^2}\mathfrak{Q}^\epsilon(\xi,\eta)\varphi_{\leq-2}(\sqrt\epsilon|\eta|)d\eta d\xi dt\bigr|
\lesssim\epsilon+\epsilon^{\f32} t.
\eeq

{\it (2) For moderate frequencies with phase far away from the spatial resonance set,} i.e.,
\beno
\f14\leq\sqrt\epsilon|\eta|\leq64,\quad |\phi^\epsilon_{+,-}(\xi,\eta)|\geq 2^{-D-1},
\eeno
there holds
\beno
|\f{\tilde{q}^\epsilon_{+,-}(\xi,\eta)}{i\Phi^\epsilon_{+,-}(\xi,\eta)}|\lesssim\f{\epsilon}{|\phi^\epsilon_{+,-}(\xi,\eta)|}\lesssim 2^D\epsilon.
\eeno
Here $D\in\N$ is a large number which will be determined later on. Integrating by parts w.r.t.  $t$ and using \eqref{M5} and \eqref{M7}, we obtain
\beq\label{M9}
\bigl|\int_0^t\int_{\R^2\times\R^2}\mathfrak{Q}^\epsilon(\xi,\eta)\varphi_{[-1,5]}(\sqrt\epsilon|\eta|)
\varphi_{\geq-D}(\phi^\epsilon_{+,-}(\xi,\eta))d\eta d\xi dt\bigr|
\lesssim2^D\epsilon+2^D\epsilon^{\f32}t.
\eeq

{\it (3) For moderate frequencies with phase near the spatial resonance set,} i.e.,
\beno
\f14\leq\sqrt\epsilon|\eta|\leq64,\quad |\phi^\epsilon_{+,-}(\xi,\eta)|\leq 2^{-D},
\eeno
we shall split the integral regime into the following two parts
\beno
\f{|\xi-\eta|}{|\eta|}\leq 2^{-K+1}\quad\text{and}\quad\f{|\xi-\eta|}{|\eta|}\in[2^{-K},2^{-5}],
\eeno
where $K\in\N$ is a large number will be determined later on.

{\it (i) For case $\f{|\xi-\eta|}{|\eta|}\leq 2^{-K+1}$,} \eqref{M6} gives rise to
\beno
|\tilde{q}^\epsilon_{+,-}(\xi,\eta)|\lesssim\sqrt\epsilon,
\eeno
and
 Sine theorem yields
\beno
\angle(\xi,\eta)\approx\sin\bigl(\angle(\xi,\eta)\bigl)=\sin\bigl(\angle(\xi-\eta,\xi)\bigr)\f{|\xi-\eta|}{|\eta|}\leq2^{-K+1},
\eeno
where $\angle(\xi,\eta)$ is the angle between vectors $\xi$ and $\eta$.
Since the bound  of $|\tilde{q}^\epsilon_{+,-}(\xi,\eta)|$ is not small enough, we shall use the smallness of the volume of integral regime whose size is determined by the size of $\angle(\xi,\eta)$.

By localizing the angular $\angle(\xi,\eta)$, we could obtain
\beq\label{M10}\begin{aligned}
&\bigl|\int_0^t\int_{\R^2\times\R^2}\mathfrak{Q}^\epsilon(\xi,\eta)\varphi_{[-1,5]}(\sqrt\epsilon|\eta|)\cdot
\varphi_{\leq-D-1}(\phi^\epsilon_{+,-}(\xi,\eta))\cdot\varphi_{\leq-K}\bigl(\f{|\xi-\eta|}{|\eta|}\bigr)d\eta d\xi dt\bigr|\lesssim\sqrt\epsilon2^{-\f{K}{2}}t.
\end{aligned}\eeq
In \eqref{M10}, we obtained a small factor $2^{-\f{K}{2}}$ which is the contribution of the size of $\angle(\xi,\eta)$.

{\it (ii) For case $\f{|\xi-\eta|}{|\eta|}\in[2^{-K},2^{-5}]$,} \eqref{M6} yields
\beq\label{M12}
|\tilde{q}^\epsilon_{+,-}(\xi,\eta)|\lesssim\epsilon 2^K|\xi-\eta|.
\eeq
Since $|\xi-\eta|$ is a good Fourier multiplier for the low frequency quantity,  the bound of $|\tilde{q}^\epsilon_{+,-}(\xi,\eta)|$ has  a small coefficient $\epsilon 2^K$. Then using the volume of the integral regime whose size is determined by the size of the phase, we have
\beq\label{M11}\begin{aligned}
&\bigl|\int_0^t\int_{\R^2\times\R^2}\mathfrak{Q}^\epsilon(\xi,\eta)\varphi_{[-1,5]}(\sqrt\epsilon|\eta|)\cdot
\varphi_{\leq-D-1}(\phi^\epsilon_{+,-}(\xi,\eta))\cdot\varphi_{\geq-K+1}\bigl(\f{|\xi-\eta|}{|\eta|}\bigr)d\eta d\xi dt\bigr|\\
&\lesssim\epsilon^{\f34} 2^{\f{3K}{2}}2^{-\f{D}{2}}t.
\end{aligned}\eeq

Combining \eqref{M8}, \eqref{M9}, \eqref{M10} and \eqref{M11}, taking optimal $K$ and $D$, we obtain
 \beno
 |A|\lesssim\epsilon^{\f23} t.
 \eeno
Then we arrive at the energy estimate \eqref{M4}. The details of the proof are given in Section 4.

\begin{remark}
If we did not split the regime (3) into two parts: $
\f{|\xi-\eta|}{|\eta|}\leq 2^{-K+1}$  and $\f{|\xi-\eta|}{|\eta|}\in[2^{-K},2^{-5}]$, the integral over the regime (3) is bounded by
 $2^{-\f{D}{2}}t$. By the optimal choice $2^D\sim\epsilon^{-1}$, one only have
 \beno
 |A|\lesssim\sqrt\epsilon t.
 \eeno
 That is to say, the normal formal technique does not improve the energy estimates. Thus, we have to decompose the integral regime in a more flexible way.
\end{remark}

\setcounter{equation}{0}
\section{Preliminary}
\subsection{Definitions and notations}

 The notation $f\sim g$ means that there exists a constant $C$ such that $\f{1}{C}f\leq g\leq Cf$.  Notations $f\lesssim g$ and $g\gtrsim f$ mean that there exists a constant $C$ such that $f\leq Cg$. We shall use $C$ to denote a universal constant which may changes from line to line. For any $s\in\R$, $H^s(\R^2)$ denotes the classical  $L^2$ based Sobolev spaces with the norm $\|\cdot\|_{H^s}$.
The notation $\|\cdot\|_{L^p}$ stands for  the $L^p(\R^2)$ norm for $1\leq p \leq \infty$.

For vectors $\xi,\eta\in\R^2$, the notation $\angle(\xi,\eta)$ represents the angle between $\xi$ and $\eta$.

The $L^2(\R^2)$ scalar product is denoted by
$(u\,|\,v)_2\eqdefa\int_{\R^2}u\bar{v}dx$.

If  $A, B$ are two operators, $[A,B]=AB-BA$ denotes their commutator.

The Fourier transform of a tempered distribution $u\in\mathcal{S}'(\R^2)$ is denoted by $\widehat{u}$, which is defined as follows
\beno
\widehat{u}(\xi)\eqdefa\mathcal{F}(u)(\xi)=\int_{\R^2}e^{-ix\cdot\xi}u(x)dx.
\eeno
We use $\mathcal{F}^{-1}(f)$ to denote the inverse Fourier  transform of $f(\xi)$.

 If $f$ and $u$ are two functions defined on $\R^2$, the  Fourier multiplier  $f(D)u$  is defined in term of Fourier transform, i.e.,
\beno
\widehat{f(D)u}(\xi)=f(\xi)\widehat{u}(\xi).
\eeno

We shall use notations
\beno
\langle\xi\rangle=\bigl(1+|\xi|^2\bigr)^{\f12},\quad\langle\na\rangle=\bigl(1+|\na|^2\bigr)^{\f12}.
\eeno

For two well-defined functions $f(x)$, $g(x)$ and their bilinear form $Q(f,g)$, we use the convection that the symbol $q(\xi,\eta)$ of $Q(f,g)$  is defined in the following sense
\beno
\mathcal{F}\bigl(Q(f,g)\bigr)(\xi)=\f{1}{4\pi^2}\int_{\R^2}q(\xi,\eta)\hat{f}(\xi-\eta)\hat{g}(\eta)d\eta.
\eeno

\subsection{Para-differential decomposition theory}
Our proof of the main result relies on  suitable energy estimates for the solutions of \eqref{Bsq2D}. To do so, we introduce  para-differential formulations (see {\it e.g.,} \cite{BCD}) to symmetrize the system \eqref{Bsq2D}.

We fix an even smooth function $\varphi:\,\R\rightarrow [0,1]$ supported in $[-\f32,\f32]$ and equals to 1 in $[-\f54,\f54]$.  For any $k\in\Z$, we define
\beno
\varphi_k(x)\eqdefa\varphi(\f{x}{2^k})-\varphi(\f{x}{2^{k-1}}),\quad \varphi_{\leq k}(x)\eqdefa\varphi(\f{x}{2^k})=\sum_{l\leq k}\varphi_l(x).
\quad  \varphi_{\geq k}(x)\eqdefa1-\varphi_{\leq k-1}(x).
\eeno
While for any interval $I$ of $\R$, we define
\beno
\varphi_I(x)\eqdefa\sum_{k\in I}\varphi_k(x)=\sum_{k\in I\cap\Z}\varphi_k(x).
\eeno
Then for any $x\in\R$,
\beq\label{Bsq 11}
\sum_{k\in\Z}\varphi_k(x)=1\quad\text{and}\quad \text{supp}\,\varphi_k(\cdot)\subset\{x\in\R\,|\,|x|\in[\f{5}{8}2^k,\f{3}{2}2^k]\}.
\eeq

We use $P_k,\, P_{\leq k}$, $P_{\geq k}$ and $P_I$ to denote the Littlewood-Paley projection operators of the Fourier multiplier $\varphi_k,\,\varphi_{\leq k},\,\varphi_{\geq k}$ and $\varphi_I$, respectively.

We shall use the following para-differential decomposition: for any functions $f,g\in\mathcal{S}'(\R^2)$,
\beq\label{para-diff decomposition}
fg=T_fg+T_gf+R(f,g),
\eeq
with the para-differential operators being defined as follows
\beno\begin{aligned}
&T_fg=\sum_{j\in\Z}P_{\leq j-7}f\cdot P_jg,\quad R(f,g)=\sum_{j\in\Z}P_jf\cdot P_{[j-6,j+6]}g.
\end{aligned}\eeno

\subsection{Analysis of the phases}
In this subsection, we shall discuss the quadratic phase function
$\Phi_{\mu,\nu}^\epsilon(\xi,\eta)$ which is defined as follows:
\beq\label{quadratic phases}
\Phi_{\mu,\nu}^\epsilon(\xi,\eta)=-\Lambda_\epsilon(\xi)+\mu\Lambda_\epsilon(\xi-\eta)+\nu\Lambda_\epsilon(\eta),\quad\mu,\nu\in\{+,-\},
\eeq
where $\Lambda_\epsilon(\xi)$ is defined by
\beno
\Lambda_\epsilon(\xi)=(1-\epsilon|\xi|^2)|\xi|=|\xi|-\epsilon|\xi|^3.
\eeno

A direct calculation shows that
\beq\label{quadratic phases explicit}\begin{aligned}
\Phi_{\mu,\nu}^\epsilon(\xi,\eta)&=(|\xi|-\mu|\xi-\eta|-\nu|\eta|)\bigl[\epsilon\bigl(|\xi|^2+|\xi-\eta|^2+|\eta|^2
-\mu\nu|\xi-\eta||\eta|\\
&\qquad
+\mu|\xi||\xi-\eta|+\nu|\xi||\eta|\bigr)-1\bigr]+3\mu\nu\epsilon|\xi||\xi-\eta||\eta|.
\end{aligned}\eeq
Then we have the following lemma.

\begin{lemma}\label{phase lemma}
Assuming that $(\xi,\eta)\in\R^2\times\R^2$ satisfy $\xi\neq\eta,\,\xi\neq0,\,\eta\neq0$ and $\angle(\xi,\eta)\neq\pi$, we have
\beno\label{phase + -}
\Phi_{+,-}^\epsilon(\xi,\eta)=\f{|\xi||\eta|}{|\xi|+|\xi-\eta|+|\eta|}\phi_{+,-}^\epsilon(\xi,\eta),
\eeno
with
\beq\label{phi + -}\begin{aligned}
\phi_{+,-}^\epsilon(\xi,\eta)&=4\cos^2\bigl(\f12\angle(\xi,\eta)\bigr)\bigl[\epsilon\bigl(|\xi|^2+|\eta|^2-|\xi||\eta|\bigr)-1\bigr]\\
&\qquad+\epsilon\bigl(4\cos^2\bigl(\f12\angle(\xi,\eta)\bigr)-3\bigr)|\xi-\eta|(|\xi|+|\xi-\eta|+|\eta|),
\end{aligned}\eeq
and
\beno\label{phase - -}
\Phi_{-,-}^\epsilon(\xi,\eta)=\bigl(|\xi|+|\xi-\eta|+|\eta|\bigr)\phi_{-,-}^\epsilon(\xi,\eta),
\eeno
with
\beq\label{phi - -}\begin{aligned}
\phi_{-,-}^\epsilon(\xi,\eta)&=\epsilon\bigl(|\xi|^2+|\eta|^2-|\xi||\eta|\bigr)-1
+\epsilon\bigl(\f{3}{4\cos^2\bigl(\f12\angle(\xi,\eta)\bigr)}-1\bigr)|\xi-\eta|(|\xi|-|\xi-\eta|+|\eta|).
\end{aligned}\eeq
\end{lemma}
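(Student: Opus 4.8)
The plan is to prove Lemma \ref{phase lemma} by a direct algebraic manipulation of the explicit formula \eqref{quadratic phases explicit} for $\Phi_{\mu,\nu}^\epsilon$, specialized to the two sign choices $(\mu,\nu)=(+,-)$ and $(\mu,\nu)=(-,-)$. The crucial observation is that the scalar prefactor $(|\xi|-\mu|\xi-\eta|-\nu|\eta|)$ in \eqref{quadratic phases explicit} must be rewritten in a form that exposes its vanishing set, and that this is exactly the step where the geometry of the triangle with sides $|\xi|,|\xi-\eta|,|\eta|$ enters through the angle $\angle(\xi,\eta)$.

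First, for $\Phi_{+,-}^\epsilon$ I would treat the prefactor $|\xi|+|\eta|-|\xi-\eta|$. Writing $|\xi-\eta|^2=|\xi|^2+|\eta|^2-2|\xi||\eta|\cos\angle(\xi,\eta)$ and rationalizing,
\beno
|\xi|+|\eta|-|\xi-\eta|=\f{(|\xi|+|\eta|)^2-|\xi-\eta|^2}{|\xi|+|\eta|+|\xi-\eta|}=\f{2|\xi||\eta|(1+\cos\angle(\xi,\eta))}{|\xi|+|\xi-\eta|+|\eta|}=\f{4|\xi||\eta|\cos^2\bigl(\f12\angle(\xi,\eta)\bigr)}{|\xi|+|\xi-\eta|+|\eta|},
\eeno
using the half-angle identity $1+\cos\theta=2\cos^2(\theta/2)$; this is legitimate since $\angle(\xi,\eta)\neq\pi$ keeps the denominator $|\xi|+|\eta|+|\xi-\eta|$ from degenerating. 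Substituting this into \eqref{quadratic phases explicit} with $\mu\nu=-1$, the bracketed factor there becomes $\epsilon(|\xi|^2+|\xi-\eta|^2+|\eta|^2+|\xi-\eta||\eta|+|\xi||\xi-\eta|-|\xi||\eta|)-1$, and the leftover term is $-3\epsilon|\xi||\xi-\eta||\eta|$. Pulling out the common factor $\f{|\xi||\eta|}{|\xi|+|\xi-\eta|+|\eta|}$ from the first piece and carefully reorganizing the $|\xi-\eta|$-dependent terms — writing $3\mu\nu\epsilon|\xi||\xi-\eta||\eta|=-3\epsilon|\xi||\xi-\eta||\eta|$ as $-3\epsilon|\xi-\eta|\cdot\f{|\xi||\eta|(|\xi|+|\xi-\eta|+|\eta|)}{|\xi|+|\xi-\eta|+|\eta|}$ and grouping with the $4\cos^2$-factor times the remaining $\epsilon|\xi-\eta|(\cdots)$ terms — yields precisely \eqref{phi + -}. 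This is essentially bookkeeping: one verifies that the coefficient of $\epsilon|\xi-\eta|(|\xi|+|\xi-\eta|+|\eta|)$ collapses to $4\cos^2(\f12\angle(\xi,\eta))-3$.

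For $\Phi_{-,-}^\epsilon$ the prefactor is $|\xi|+|\xi-\eta|+|\eta|$, which needs no rationalization — it factors out immediately and is manifestly nonzero. Now $\mu\nu=1$, so the bracketed term in \eqref{quadratic phases explicit} is $\epsilon(|\xi|^2+|\xi-\eta|^2+|\eta|^2-|\xi-\eta||\eta|-|\xi||\xi-\eta|+|\xi||\eta|)-1$ and the leftover is $+3\epsilon|\xi||\xi-\eta||\eta|$. To match \eqref{phi - -} I would express $|\xi||\eta|$ through the angle: from $|\xi-\eta|^2=|\xi|^2+|\eta|^2-2|\xi||\eta|\cos\angle(\xi,\eta)$ one gets, combined with the same half-angle identity, $|\xi||\eta|=\f{(|\xi|+|\eta|)^2-|\xi-\eta|^2}{4\cos^2(\f12\angle(\xi,\eta))}$, hence $3\epsilon|\xi||\xi-\eta||\eta|=\f{3\epsilon|\xi-\eta|(|\xi|+|\eta|-|\xi-\eta|)(|\xi|+|\eta|+|\xi-\eta|)}{4\cos^2(\f12\angle(\xi,\eta))}$. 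Dividing everything by $|\xi|+|\xi-\eta|+|\eta|$ and again collecting the $|\xi-\eta|$-terms — the $-|\xi-\eta||\eta|-|\xi||\xi-\eta|$ contributions from the bracket combine with the $3/(4\cos^2)$-term — produces the stated coefficient $\f{3}{4\cos^2(\f12\angle(\xi,\eta))}-1$ multiplying $\epsilon|\xi-\eta|(|\xi|-|\xi-\eta|+|\eta|)$.

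The main obstacle is purely organizational rather than conceptual: one must track several cancellations among the six quadratic monomials in \eqref{quadratic phases explicit} and make sure the $|\xi-\eta|$-linear remainder is grouped against the correct combination $|\xi|\pm|\xi-\eta|+|\eta|$ so that the trigonometric coefficient comes out exactly as claimed. I would carry this out by first isolating the "$\epsilon$-free" part (which is $-(|\xi|-\mu|\xi-\eta|-\nu|\eta|)$, handled by the rationalization above), then the "symmetric $\epsilon$-part" $\epsilon(|\xi|^2+|\eta|^2-\mu\nu|\xi||\eta|)$-type terms, and finally reconciling all terms that carry a factor $|\xi-\eta|$, checking the identity $4\cos^2(\f12\angle(\xi,\eta))=2+2\cos\angle(\xi,\eta)=\f{(|\xi|+|\eta|)^2-|\xi-\eta|^2}{|\xi||\eta|}$ at each substitution. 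The hypotheses $\xi\neq\eta$, $\xi\neq0$, $\eta\neq0$, $\angle(\xi,\eta)\neq\pi$ are exactly what is needed so that none of the denominators ($|\xi|+|\xi-\eta|+|\eta|$, $\cos^2(\f12\angle(\xi,\eta))$, or the factors used in rationalization) vanish.
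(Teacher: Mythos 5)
Your proposal is essentially the paper's own proof: the paper likewise rationalizes $|\xi|-|\xi-\eta|+|\eta|=\f{(|\xi|+|\eta|)^2-|\xi-\eta|^2}{|\xi|+|\xi-\eta|+|\eta|}=\f{4|\xi||\eta|\cos^2(\f12\angle(\xi,\eta))}{|\xi|+|\xi-\eta|+|\eta|}$ and rewrites $\f{|\xi||\eta||\xi-\eta|}{|\xi|+|\xi-\eta|+|\eta|}=\f{|\xi-\eta|(|\xi|-|\xi-\eta|+|\eta|)}{4\cos^2(\f12\angle(\xi,\eta))}$, then substitutes into \eqref{quadratic phases explicit} exactly as you describe. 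One small correction: for $(\mu,\nu)=(-,-)$ the term $\nu|\xi||\eta|$ in the bracket of \eqref{quadratic phases explicit} is $-|\xi||\eta|$, not $+|\xi||\eta|$ as written in your sketch; with the correct sign (which is what produces the $-|\xi||\eta|$ in $\phi_{-,-}^\epsilon$) your bookkeeping closes as claimed.
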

\begin{proof}
Since
\beno\begin{aligned}
&|\xi|-|\xi-\eta|+|\eta|=\f{(|\xi|+|\eta|)^2-|\xi-\eta|^2}{|\xi|+|\xi-\eta|+|\eta|}
 =\f{2|\xi||\eta|\bigl(1+\cos\bigl(\angle(\xi,\eta)\bigr)\bigr)}{|\xi|+|\xi-\eta|+|\eta|}
 =\f{4|\xi||\eta|\cos^2\bigl(\f12\angle(\xi,\eta)\bigr)}{|\xi|+|\xi-\eta|+|\eta|},\\
&\f{|\xi||\eta||\xi-\eta|}{|\xi|+|\xi-\eta|+|\eta|}=\f{|\xi||\eta||\xi-\eta|(|\xi|-|\xi-\eta|+|\eta|)}{(|\xi|+|\eta|)^2-|\xi-\eta|^2}
=\f{|\xi-\eta|(|\xi|-|\xi-\eta|+|\eta|)}{4\cos^2\bigl(\f12\angle(\xi,\eta)\bigr)},
\end{aligned}\eeno
\eqref{phase + -} and \eqref{phase - -} follow from \eqref{quadratic phases explicit}.
\end{proof}

\subsection{Bilinear estimates with the angle localized}
In order to improve the energy estimates near the spatial resonance set, we shall localize the angle $\angle(\xi,\eta)$ between $\xi$ and $\eta$ whose small size makes a crucial contribution in the energy estimates. To catch the contribution caused by the localized angle, we need the following bilinear estimate.

\begin{lemma}\label{biliear lemma}
Let $l,\,k,\,k_1,\,k_2\in\Z$,$|k-k_2|\leq2$, $l\leq-2$ and  $m(\xi,\eta)$ satisfy
\beno
\|m\|_{L^\infty_{k,k_1,k_2}}\eqdefa\|m(\xi,\eta)\|_{L^\infty_{\xi,\eta}(|\xi|\sim 2^k,\,|\xi-\eta|\sim 2^{k_1},\,|\eta|\sim2^{k_2})}<+\infty.
\eeno
For any $\mathfrak{f},\,\mathfrak{g}\in L^2(\R^2)$, defining a  bilinear form as follows
\beno
T_k(\mathfrak{f},\mathfrak{g})(\xi)=\int_{\R^2}m(\xi,\eta)\hat{\mathfrak{f}}(\xi-\eta)\hat{\mathfrak{g}}(\eta)\varphi_{k}(\xi)\varphi_{k_1}(\xi-\eta)
\varphi_{k_2}(\eta)\varphi_{l}(\angle(\xi,\eta))d\eta,
\eeno
we have
\beq\label{biliear estimate}
\|T_k(\mathfrak{f},\mathfrak{g})(\xi)\|_{L^2}\lesssim2^{\f{l}{2}}2^{k_1}\|m\|_{L^\infty_{k,k_1,k_2}}\cdot \|\mathfrak{f}_{k_1}\|_{L^2}\cdot\|\mathfrak{g}_{k_2}\|_{L^2},
\eeq
where $\mathfrak{f}_{k_1}=\mathcal{F}^{-1}\bigl[\hat{\mathfrak{f}}(\xi)\varphi_{k_1}(\xi)\bigr]$,
$\mathfrak{g}_{k_2}=\mathcal{F}^{-1}\bigl[\hat{\mathfrak{g}}(\xi)\varphi_{k_2}(\xi)\bigr]$.
\end{lemma}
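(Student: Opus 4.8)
\textbf{Proof proposal for Lemma \ref{biliear lemma}.}

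The plan is to reduce the $L^2$ estimate for $T_k(\mathfrak{f},\mathfrak{g})$ to a Schur-type (Cauchy--Schwarz in the $\eta$ variable) bound, in which the only nontrivial ingredient is controlling the measure of the set of $\eta$ that is compatible with the three frequency localizations \emph{and} the angular localization $\varphi_l(\angle(\xi,\eta))$. First I would pull the symbol out: since $m$ only enters through its $L^\infty$ norm on the relevant frequency block, we have pointwise
\beno
|T_k(\mathfrak{f},\mathfrak{g})(\xi)|\lesssim\|m\|_{L^\infty_{k,k_1,k_2}}\,\varphi_k(\xi)\int_{\R^2}|\hat{\mathfrak{f}}_{k_1}(\xi-\eta)|\,|\hat{\mathfrak{g}}_{k_2}(\eta)|\,\varphi_l(\angle(\xi,\eta))\,d\eta,
\eeno
so it suffices to bound the $L^2_\xi$ norm of this convolution-type integral by $2^{l/2}2^{k_1}\|\mathfrak{f}_{k_1}\|_{L^2}\|\mathfrak{g}_{k_2}\|_{L^2}$. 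I would apply Cauchy--Schwarz in $\eta$ with the weight $\varphi_l(\angle(\xi,\eta))\varphi_{k_1}(\xi-\eta)$ split as $\bigl(\varphi_l\varphi_{k_1}\bigr)^{1/2}\cdot\bigl(\varphi_l\varphi_{k_1}\bigr)^{1/2}$, giving
\beno
|T_k(\mathfrak{f},\mathfrak{g})(\xi)|^2\lesssim\|m\|_{L^\infty_{k,k_1,k_2}}^2\,\Bigl(\sup_{\xi}\,|E_{\xi}|\Bigr)\int_{\R^2}|\hat{\mathfrak{f}}_{k_1}(\xi-\eta)|^2\,|\hat{\mathfrak{g}}_{k_2}(\eta)|^2\,\varphi_l(\angle(\xi,\eta))\,d\eta,
\eeno
where $E_\xi=\{\eta:\ |\xi-\eta|\sim2^{k_1},\ \angle(\xi,\eta)\lesssim2^l\}$ and I used $|\eta|\sim2^{k_2}\sim2^k\sim|\xi|$. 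Integrating in $\xi$ and using Fubini (the remaining $\varphi_l$ factor is harmless, bounded by $1$), the double integral is $\lesssim\|\mathfrak{f}_{k_1}\|_{L^2}^2\|\mathfrak{g}_{k_2}\|_{L^2}^2$, so the whole estimate comes down to showing
\beno
\sup_{\xi:\,|\xi|\sim2^k}\ \bigl|\{\eta\in\R^2:\ |\xi-\eta|\sim2^{k_1},\ \angle(\xi,\eta)\lesssim2^l\}\bigr|\lesssim2^{l}\,2^{2k_1}.
\eeno

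The geometric heart of the argument is this volume bound, and it is where I expect the only real work to be. Fix $\xi$ with $|\xi|\sim2^k$. The constraint $\angle(\xi,\eta)\lesssim2^l$ with $l\le-2$ forces $\eta$ to lie in a thin cone around the ray $\R_+\xi$ of angular half-width $\sim2^l$; combined with $|\xi-\eta|\sim2^{k_1}$ and $|k-k_2|\le2$ (so $|\eta|\sim2^k$, comparable to $|\xi|$, which in particular forces $2^{k_1}\lesssim2^k$ up to the angular correction), the admissible $\eta$ region is contained in a box of dimensions $\sim2^{k_1}$ in the radial ($\xi$) direction and $\sim|\eta|\cdot2^l\sim2^k2^l$ in the transverse direction. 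Hence its area is $\lesssim2^{k_1}\cdot2^k2^l$. To finish one notes that on the support, $\angle(\xi,\eta)\sim2^l$ together with $|\eta|\sim2^k$ and $|\xi-\eta|\sim2^{k_1}$ forces, by the law of sines / elementary triangle geometry, $2^k2^l\lesssim2^{k_1}$ (the side opposite the small angle is the shortest), so $2^k2^l\lesssim2^{k_1}$ and therefore the area is $\lesssim2^{k_1}\cdot2^{k_1}=2^{2k_1}$; keeping instead one factor of $2^l$ explicit gives $\lesssim2^l2^{2k_1}$ as needed. Taking the square root of the assembled inequality yields \eqref{biliear estimate}.

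A couple of technical points I would be careful about. First, the splitting of $\varphi_l(\angle(\xi,\eta))$ as a product of square roots is legitimate because $\varphi_l\ge0$; one could equally write $\varphi_l=\varphi_l\cdot\varphi_{[l-1,l+1]}$ (with a fattened cutoff) to avoid any smoothness worry, but for the $L^\infty$/measure argument sign-positivity is all that is used. Second, the reduction $|\eta|\sim2^k$ uses $|k-k_2|\le 2$ directly, and the comparison $|\xi|\sim|\eta|$ is what makes the transverse width of the cone $\sim2^k2^l$ rather than $\sim|\eta|2^l$ with an independent $|\eta|$; these are exactly the hypotheses in the statement. Everything else (pulling out $\|m\|_{L^\infty}$, Cauchy--Schwarz, Fubini) is routine. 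Thus the main obstacle is purely the planar geometry estimate on $|E_\xi|$, which is elementary but must be done with the right bookkeeping to land the sharp power $2^l2^{2k_1}$.
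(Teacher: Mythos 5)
There is a genuine gap, and it sits exactly where you predicted the "only real work" would be: the volume bound. For fixed $\xi$ with $|\xi|\sim2^k$, the set $E_\xi=\{\eta:\,|\eta|\sim2^{k_2},\ |\xi-\eta|\sim2^{k_1},\ \angle(\xi,\eta)\sim2^l\}$ is indeed contained in a polar box of radial extent $\lesssim2^{k_1}$ and angular extent $\lesssim2^l$, so $|E_\xi|\lesssim2^{k_1}\cdot2^{k_2}2^l$, and this is sharp (for a generic admissible $\xi$ the annulus $\{|\xi-\eta|\sim2^{k_1}\}$ really meets the cone in a set of radial thickness $\sim2^{k_1}$ and full angular width $\sim 2^l$). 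The law of sines gives $2^{k_2+l}\lesssim2^{k_1}$, whence $|E_\xi|\lesssim2^{2k_1}$ — but your final step, "keeping one factor of $2^l$ explicit gives $\lesssim2^l2^{2k_1}$", is a non sequitur: the substitution $2^{k_2}2^l\lesssim2^{k_1}$ \emph{consumes} the factor $2^l$; you cannot both use it and retain a spare $2^l$. In truth $|E_\xi|\sim2^{k_1+k_2+l}=2^{2k_1+l}\cdot2^{k_2-k_1}$, and the hypotheses allow $2^{k_2-k_1}$ to be as large as $2^{-l}$ (the regime $2^{k_1}\sim2^{k_2+l}$, where $\xi-\eta$ is essentially transverse to $\eta$ — precisely the regime the lemma is used in, since in Step 3.3(3)(i) one has $|\xi-\eta|\le2^{-K+1}|\eta|$ with $2^l\lesssim2^{k_1-k_2}$). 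So your claimed bound on $|E_\xi|$ fails by up to $2^{-l}$.

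This is not a bookkeeping slip that sharper planar geometry would repair: the one‑sided Schur bound $\|T_k\|_{L^2}\lesssim(\sup_\xi|E_\xi|)^{1/2}\|m\|_{L^\infty_{k,k_1,k_2}}\|\mathfrak{f}_{k_1}\|_{L^2}\|\mathfrak{g}_{k_2}\|_{L^2}$ is essentially saturated (take $\hat{\mathfrak f}_{k_1}$ to be the indicator of the whole annulus $\{|\zeta|\sim2^{k_1}\}$ and $\hat{\mathfrak g}_{k_2}$ the indicator of a single angular sector of width $\sim2^l$ at radius $2^{k_2}$, with $2^{k_1}\sim2^{k_2+l}$), so Cauchy–Schwarz in $\eta$ alone can only ever produce $2^{(k_1+k_2+l)/2}$, which exceeds the target $2^{k_1+l/2}$ by $2^{(k_2-k_1)/2}$ whenever $k_1<k_2$. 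To do better than Schur one must exploit orthogonality, and that is what the paper's proof does: it decomposes $\omega_\xi$, $\omega_{\xi-\eta}$ and $\omega_\eta$ into angular sectors adapted to $\angle(\xi,\eta)\sim2^l$ and $\angle(\xi-\eta,\pm\eta)\sim2^{k_2-k_1+l}$, estimates each localized piece by Young's inequality $L^1*L^2\to L^2$ together with $\|\widehat{\mathfrak f_{k_1}}\,b^{j}\|_{L^1}\lesssim(2^{2k_1+l})^{1/2}\|\widehat{\mathfrak f_{k_1}}\,b^{j}\|_{L^2}$ (a sector of the $2^{k_1}$-annulus of angular width $2^l$ has area $\sim2^{2k_1+l}$), and then sums the \emph{squares} of the pieces using the $L^2_\xi$-almost-orthogonality of the output sectors and the finite compatibility of the sector indices, rather than multiplying a sup by a count. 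That orthogonal summation is precisely the mechanism that produces the factor $2^{l/2}$ your route loses; to salvage your approach you would have to insert such an angular decomposition before applying Cauchy–Schwarz.
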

\begin{remark}
The main contribution of the bilinear estimate \eqref{biliear estimate} is to gain  the factor $2^{\f{l}{2}}$.
\end{remark}
\begin{proof}
Firstly,  for a given small number $2^n\in(0,8)$ $(n\in\Z)$, we decompose the unit circle $\mathbb{S}^1$ into the union of $N_n$ angular sectors, and each sector has angular size $2^n$, where $N_n=O(2^{-n})$. The number of the overlaps is bounded by a universal number $n_0\in\N$ that is independent of $n$. We use notation $\omega\in\mathbb{S}^1$ as the angular vector and $\D\omega$ as the size of each angular sector. Then there exists a partition of unity $\{b_n^{j}(\omega)\}=\{b_n^{j}(\omega)\}_{j=1,\cdots, N_n}$ corresponding to the decomposition of $\mathbb{S}^1$.

We define
the angular vectors of $\xi,\,\eta,\xi-\eta\in\R^2\setminus\{0\}$ as follows
\beno
\omega_\xi\eqdefa\f{\xi}{|\xi|}\in\mathbb{S}^1,\quad\omega_\eta\eqdefa\f{\eta}{|\eta|}\in\mathbb{S}^1,
\quad\omega_{\xi-\eta}\eqdefa\f{\xi-\eta}{|\xi-\eta|}\in\mathbb{S}^1.
\eeno
By virtue of the Sine theorem, we have the angular partition of unity $\{b_{k_2-k_1+l}^{j_\xi}(\omega_\xi)\}$,
$\{b_l^{j_{\xi-\eta}}(\omega_{\xi-\eta})\}$ and $\{b_{k_2-k_1+l}^{j_\eta}(\omega_\eta)\}$  for $\xi,\xi-\eta,\eta$, where
\beq\label{angular partitions}\begin{aligned}
&|\omega_\xi-\omega_\eta|\sim\angle(\xi,\eta)\sim 2^l,\\
& |\omega_{\xi-\eta}-\omega_\eta|\sim\angle(\xi-\eta,\eta)\sim 2^{k_2-k_1+l}\,\,
\text{or}\,\,|\omega_{\xi-\eta}+\omega_\eta|\sim\angle(\xi-\eta,-\eta)\sim 2^{k_2-k_1+l},\\
&\D\omega_{\xi-\eta}\sim2^l,\quad\D\omega_\xi\sim2^{k_2-k_1+l},\quad \D\omega_\eta\sim2^{k_2-k_1+l}.
\end{aligned}\eeq
Since $|\xi-\eta|\geq|\eta|\sin\bigl(\angle(\xi,\eta)\bigr)\approx|\eta|\angle(\xi,\eta)$, there holds $k_2-k_1+l\leq4$ so that the decompositions in \eqref{angular partitions} are reasonable.
Then we have
\beq\label{B13}\begin{aligned}
T_k(\mathfrak{f},\mathfrak{g})(\xi)&=\sum_{j_\xi,j_{\xi-\eta},j_{\eta}}\int_{\R^2}m(\xi,\eta)\widehat{\mathfrak{f}_{k_1}}(\xi-\eta)
\widehat{\mathfrak{g}_{k_2}}(\eta)\varphi_{k}(\xi)\\
&\qquad\cdot b_{k_2-k_1+l}^{j_\xi}(\omega_\xi)b_l^{j_{\xi-\eta}}(\omega_{\xi-\eta})
b_{k_2-k_1+l}^{j_\eta}(\omega_\eta)\varphi_{l}(\angle(\xi,\eta))d\eta.
\end{aligned}\eeq
We remark that $j_\xi,j_{\xi-\eta},j_{\eta}$ in \eqref{B13} are restricted by \eqref{angular partitions}.

Thanks to the $L^2$-orthogonality of $\{b_{k_2-k_1+l}^{j_\xi}(\omega_\xi)\}$, we have
\beq\label{B14}\begin{aligned}
&\|T_k(\mathfrak{f},\mathfrak{g})\|_{L^2_\xi}^2\lesssim\sum_{j_\xi,j_{\xi-\eta},j_{\eta}}
\|\int_{\R^2}m(\xi,\eta)\widehat{\mathfrak{f}_{k_1}}(\xi-\eta)
\widehat{\mathfrak{g}_{k_2}}(\eta)\varphi_{k}(\xi)\\
&\qquad\cdot b_{k_2-k_1+l}^{j_\xi}(\omega_\xi)b_l^{j_{\xi-\eta}}(\omega_{\xi-\eta})
b_{k_2-k_1+l}^{j_\eta}(\omega_\eta)\varphi_{l}(\angle(\xi,\eta))d\eta\|_{L^2_\xi}^2\\
&\lesssim\|m\|_{L^\infty_{k,k_1,k_2}}^2
\sum_{j_\xi,j_{\xi-\eta},j_{\eta}}\|\widehat{\mathfrak{f}_{k_1}}(\xi-\eta)b_l^{j_{\xi-\eta}}(\omega_{\xi-\eta})\|_{L^1_{\xi-\eta}}^2
\|\widehat{\mathfrak{g}_{k_2}}(\eta)b_l^{j_\eta}(\omega_{\eta})\|_{L^2_\eta}^2\\
&\lesssim2^{2k_1+l}\|m\|_{L^\infty_{k,k_1,k_2}}^2
\sum_{j_{\xi-\eta},j_{\eta}}\sum_{j_\xi}\|\widehat{\mathfrak{f}_{k_1}}(\xi-\eta)b_l^{j_{\xi-\eta}}(\omega_{\xi-\eta})\|_{L^2_{\xi-\eta}}^2
\|\widehat{\mathfrak{g}_{k_2}}(\eta)b_l^{j_\eta}(\omega_{\eta})\|_{L^2_\eta}^2.
\end{aligned}\eeq
In the last inequality of \eqref{B14}, we used the volume of the integral regime.

Due to \eqref{angular partitions}, for fixed $j_{\xi-\eta}$, there are finite $j_{\eta}$ such that \eqref{angular partitions} holds. While for fixed
$j_\eta$, there are finite $j_\xi$ such that \eqref{angular partitions} holds. Using the $L^2$-orthogonality of decompositions for the angle of $\xi-\eta$ and $\eta$, we deduce from \eqref{B14} that
\beno
\|T_k(\mathfrak{f},\mathfrak{g})\|_{L^2_\xi}
\lesssim2^{k_1+\f{l}{2}}\|m\|_{L^\infty_{k,k_1,k_2}}\|\mathfrak{f}_{k_1}\|_{L^2}
\|\mathfrak{g}_{k_2}\|_{L^2}.
\eeno
This is exactly \eqref{biliear estimate}. The lemma is proved.
\end{proof}

\medskip

We end up this section with the following commutator estimate.

\begin{lemma}\label{commutator lem 2} Let $s>-1$, $\na a\in L^\infty(\R^2)$ and $b\in H^{s-1}(\R^2)$. There holds
\beq\label{commutator}
\|[|D|^{-1}\div,T_a]b\|_{H^s}\lesssim\|\na a\|_{L^\infty}\|b\|_{H^{s-1}}.
\eeq
\end{lemma}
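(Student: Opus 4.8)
The plan is to exploit the localization of the paraproduct $T_a$ and the fact that the commutator gains one derivative off the low-frequency factor. Write, using the definition $T_a b = \sum_{j\in\Z} P_{\leq j-7}a\cdot P_j b$,
\beno
[|D|^{-1}\div,T_a]b=\sum_{j\in\Z}\Bigl(|D|^{-1}\div\bigl(P_{\leq j-7}a\cdot P_j b\bigr)-P_{\leq j-7}a\cdot|D|^{-1}\div\,P_j b\Bigr)
=\sum_{j\in\Z}C_j,
\eeno
where each term $C_j$ is spectrally supported (up to the usual enlargement by a fixed factor) in an annulus $|\xi|\sim 2^j$, so that $\|[|D|^{-1}\div,T_a]b\|_{H^s}^2\lesssim\sum_j 2^{2js}\|C_j\|_{L^2}^2$ by the almost-orthogonality of the dyadic pieces. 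The key point is then the pointwise (in frequency) kernel bound: writing $m_\ell(\zeta)=\zeta_\ell/|\zeta|$ for the symbol of $|D|^{-1}\p_\ell$, we have
\beno
\widehat{C_j^\ell}(\xi)=\int \bigl(m_\ell(\xi)-m_\ell(\eta)\bigr)\widehat{P_{\leq j-7}a}(\xi-\eta)\,\widehat{P_j b}(\eta)\,d\eta,
\eeno
and on the support of the integrand $|\xi-\eta|\lesssim 2^{j-7}\ll|\eta|\sim 2^j$, so $|m_\ell(\xi)-m_\ell(\eta)|\lesssim |\xi-\eta|\,\|\nabla m_\ell\|_{L^\infty(|\zeta|\sim 2^j)}\lesssim 2^{-j}|\xi-\eta|$. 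This is the mechanism that produces the gain of one derivative.

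The second step is to convert this frequency bound into the estimate $\|C_j\|_{L^2}\lesssim 2^{-j}\|\nabla P_{\leq j-7}a\|_{L^\infty}\|P_j b\|_{L^2}$. The factor $2^{-j}|\xi-\eta|$ combines with $\widehat{P_{\leq j-7}a}(\xi-\eta)$ to give (the Fourier transform of) $2^{-j}\nabla P_{\leq j-7}a$, and a first-order Taylor expansion of $m_\ell$ writes $C_j^\ell$ as a finite sum of terms of the form $2^{-j}\,(\partial_k m_\ell)(D)$ applied to a product $R_{k}(P_{\leq j-7}\partial a)\cdot P_j b$ with $\|(\partial_k m_\ell)(D)\|_{L^2\to L^2}\lesssim 1$ on the relevant annulus; one then applies Bernstein together with $\|\nabla P_{\leq j-7}a\|_{L^\infty}\lesssim\|\nabla a\|_{L^\infty}$ (the low-frequency truncation is bounded on $L^\infty$). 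Hence $2^{js}\|C_j\|_{L^2}\lesssim \|\nabla a\|_{L^\infty}\,2^{j(s-1)}\|P_j b\|_{L^2}$, and squaring and summing in $j$ gives $\|[|D|^{-1}\div,T_a]b\|_{H^s}\lesssim\|\nabla a\|_{L^\infty}\|b\|_{H^{s-1}}$, which is \eqref{commutator}. The hypothesis $s>-1$ is used only to make $\sum_j 2^{2j(s-1)}\|P_j b\|_{L^2}^2\lesssim\|b\|_{H^{s-1}}^2$ meaningful with the homogeneous-type decomposition; since $\widehat b(0)=0$ is assumed throughout the paper, the low-frequency sum causes no trouble.

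The main obstacle — really the only subtle point — is making the symbol-difference estimate $|m_\ell(\xi)-m_\ell(\eta)|\lesssim 2^{-j}|\xi-\eta|$ rigorous and turning it cleanly into an operator bound, since $m_\ell(\zeta)=\zeta_\ell/|\zeta|$ is homogeneous of degree $0$ and singular at the origin: one must use that on the support $|\eta|\sim 2^j$ and $|\xi|\sim 2^j$ are bounded away from $0$, so $\nabla m_\ell$ is bounded by $2^{-j}$ there, and handle the remainder in Taylor's formula by a standard Coifman–Meyer / paraproduct kernel argument rather than a crude absolute-value bound (a naive bound would lose the summability in $j$). Everything else is bookkeeping with Littlewood–Paley pieces and Bernstein's inequality.
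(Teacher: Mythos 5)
For frequencies $2^j\gtrsim 1$ your argument is the standard dyadic proof of this commutator estimate and it is sound: the decomposition $[|D|^{-1}\div,T_a]b=\sum_jC_j$, the spectral localization of $C_j$ in an annulus $|\xi|\sim 2^j$, the bound $|m_\ell(\xi)-m_\ell(\eta)|\lesssim 2^{-j}|\xi-\eta|$ on the support (where $|\xi|,|\eta|\sim 2^j$ and the segment joining them stays in that annulus), and the Coifman--Meyer-type conversion to $\|C_j\|_{L^2}\lesssim 2^{-j}\|\na a\|_{L^\infty}\|P_jb\|_{L^2}$ are all correct. Note that the paper does not argue this way at all: it simply invokes Theorem 3 of \cite{La2}, so a self-contained Littlewood--Paley proof is a legitimate alternative route; but yours has a real gap at low frequencies.

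The gap is the regime $j<0$, and it is not cosmetic. You use both
$\|[|D|^{-1}\div,T_a]b\|_{H^s}^2\lesssim\sum_j2^{2js}\|C_j\|_{L^2}^2$ and
$\sum_j2^{2j(s-1)}\|P_jb\|_{L^2}^2\lesssim\|b\|_{H^{s-1}}^2$; with the inhomogeneous norms of the statement the first requires replacing $2^{2js}$ by $\langle 2^j\rangle^{2s}$ (otherwise it fails for $j<0$ when $s>0$), while the second fails for $j<0$ whenever $s<1$, and no value of $s$ makes both harmless simultaneously. The underlying issue is that your (sharp) block estimate gives
$\langle 2^j\rangle^{s}\|C_j\|_{L^2}\lesssim 2^{-j}\|\na a\|_{L^\infty}\langle 2^j\rangle^{s}\|P_jb\|_{L^2}$, and for $j\to-\infty$ the factor $2^{-j}$ blows up while $\langle 2^j\rangle^{s-1}\sim\langle 2^j\rangle^{s}\sim 1$: the ``gain of one derivative'' buys nothing against the inhomogeneous weight at low frequencies. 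The appeal to $\widehat b(0)=0$ does not repair this, since vanishing of $\widehat b$ at a single point gives no quantitative decay of $\|P_jb\|_{L^2}$ as $j\to-\infty$; and the obstruction is genuine, because the symbol $\eta\mapsto\eta_\ell/|\eta|$ really does vary by an amount comparable to $|\xi-\eta|/|\eta|$ over the support, so the factor $2^{-j}$ in $\|C_j\|_{L^2}$ cannot be removed. What your computation actually proves is the homogeneous estimate $\|[|D|^{-1}\div,T_a]b\|_{\dot H^s}\lesssim\|\na a\|_{L^\infty}\|b\|_{\dot H^{s-1}}$. To close the argument you must either settle for that statement and check it suffices where the lemma is used (in the paper's applications $b=\na\vv u$ is a gradient, or the commutator is pre-multiplied by $|D|$, and either circumstance supplies the missing factor $2^{j}$ at low frequencies), or give a separate, different treatment of the low-frequency part of the commutator rather than asserting it ``causes no trouble.''
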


The proof of Lemma \ref{commutator lem 2} follows from the definition of $T_ab$ and Theorem 3 of \cite{La2}.

\setcounter{equation}{0}
\section{Symmetrization of system \eqref{Bsq2D}}
In this section, we shall symmetrize system \eqref{Bsq2D} by using para-differential decomposition and introducing good unknowns. Then we state a main proposition on the symmetric system.

\subsection{Symmetrization of system \eqref{Bsq2D}}
Firstly, we introduce a good unknown $\vv u$ with
\beq\label{good unknowns for 2D}
\vv u=\vv v+\epsilon\vv B^\epsilon(\z,\vv v),
\eeq
where $\vv B^\epsilon(\cdot,\cdot)$ is a bilinear operator defined as
\beno
\vv B^\epsilon(f,\vv g)=\f12T_f\bigl((1+\epsilon\D)^{-1}\varphi_{\geq6}(\sqrt\epsilon|D|)\vv g\bigr).
\eeno
Here $\vv u$ is called the good unknown in the sense of Alinhac in \cite{ABZ,AM,Alin}.
Without confusion, we sometimes use $\vv B^\epsilon$ to denote the bilinear term $\vv B^\epsilon(\z,\vv v)$.

With $(\z,\vv u)$, we rewrite the first equation of \eqref{Bsq2D} as
\beno\begin{aligned}
\p_t\z+(1+\epsilon\D)\na\cdot\vv u&=-\epsilon \na\cdot(\z\vv v)+\epsilon(1+\epsilon\D)\na\cdot\vv B^\epsilon(\z,\vv v)\\
&=-\epsilon \na\cdot(T_{\vv v}\z)-\epsilon\na\cdot(T_\z\vv v)-\epsilon\na\cdot\bigl(R(\z,\vv v)\bigr)+\epsilon(1+\epsilon\D)\na\cdot\vv B^\epsilon(\z,\vv v).
\end{aligned}\eeno
Since
\beno\begin{aligned}
&\epsilon(1+\epsilon\D)\na\cdot\vv B^\epsilon=\f{\epsilon}{2}\na\cdot\bigl(T_\z \varphi_{\geq6}(\sqrt{\epsilon}|D|)\vv v\bigr)+\f{\epsilon^2}{2}\na\cdot\Bigl([\D,T_\z](1+\epsilon\D)^{-1}\varphi_{\geq6}(\sqrt{\epsilon}|D|)\vv v\Bigr),
\end{aligned}\eeno
we have
\beq\label{equation for zeta}\begin{aligned}
\p_t\z+(1+\epsilon\D)\na\cdot\vv u+\epsilon \na\cdot(T_{\vv v}\z)+\f{\epsilon}{2}\na\cdot(T_\z\vv u)=-\f{\epsilon}{2}\na\cdot\bigl(T_\z \varphi_{\leq5}(\sqrt\epsilon|D|)\vv v\bigr)+N_\z^\epsilon,
\end{aligned}\eeq
where
\beno\begin{aligned}
N_\z^\epsilon&=-\epsilon\na\cdot\bigl(R(\z,\vv v)\bigr)
+\f{\epsilon^2}{2}\na\cdot\Bigl([\D,T_\z](1+\epsilon\D)^{-1}\varphi_{\geq6}(\sqrt{\epsilon}|D|)\vv v\Bigr)+\f{\epsilon^2}{2}\na\cdot\bigl(T_\z \vv B^\epsilon\bigr).
\end{aligned}\eeno

Due to \eqref{curl free condition}, we have
\beq\label{B1}
\f{\epsilon}{2}\na\bigl(|\vv v|^2\bigr)=\epsilon\vv v\cdot\na\vv v=\epsilon T_{\vv v}\cdot\na\vv v+\epsilon T_{\na\vv v}\cdot\vv v+\epsilon R(\vv v\cdot,\na\vv v).
\eeq
Using \eqref{Bsq2D}, \eqref{good unknowns for 2D} and \eqref{B1}, we have
\beno\begin{aligned}
&\p_t\vv u+(1+\epsilon\D)\na\z=\p_t\vv v+(1+\epsilon\D)\na\z+\epsilon\vv B^\epsilon(\p_t\z,\vv v)+\epsilon\vv B^\epsilon(\z,\p_t\vv v)\\
&=-\epsilon T_{\vv v}\cdot\na\vv v-\epsilon T_{\na\vv v}\cdot\vv v-\epsilon R(\vv v\cdot,\na\vv v)-\epsilon \vv B^\epsilon\bigl((1+\epsilon\D)\na\cdot\vv v,\vv v\bigr)\\
&\quad-\epsilon^2 \vv B^\epsilon\bigl(\na\cdot(\z\vv v),\vv v\bigr)-\epsilon\vv B^\epsilon\bigl(\z,(1+\epsilon\D)\na\z\bigr)-\f{\epsilon^2}{2}\vv B^\epsilon(\z,\na(|\vv v|^2)).
\end{aligned}\eeno

Due to the definition of $\vv B^\epsilon(\cdot,\cdot)$, we have
\beno
\vv B^\epsilon\bigl(\z,(1+\epsilon\D)\na\z\bigr)=\f12T_\z\bigl(\varphi_{\geq6}(\sqrt\epsilon|D|)\na\z\bigr),
\eeno
which implies
\beq\label{equation for u}
\p_t\vv u+(1+\epsilon\D)\na\z+\epsilon T_{\vv v}\cdot\na\vv u+\f{\epsilon}{2}\na(T_\z\z)=\f{\epsilon}{2}\na\bigl(T_\z\varphi_{\leq5}(\sqrt\epsilon|D|)\z\bigr)
+N_{\vv u}^\epsilon,
\eeq
where
\beno\begin{aligned}
N_{\vv u}^\epsilon&=\f{\epsilon}{2}T_{\na\z}\varphi_{\geq6}(\sqrt\epsilon|D|)\z-\epsilon T_{\na\vv v}\cdot\vv v-\epsilon R(\vv v\cdot,\na\vv v)
-\epsilon\vv B^\epsilon\bigl((1+\epsilon\D)\na\cdot\vv v,\vv v\bigr)\\
&\quad+\epsilon^2T_{\vv v}\cdot\na\vv B^\epsilon-\epsilon^2\vv B^\epsilon\bigl(\na\cdot(\z\vv v),\vv v\bigr)-\f{\epsilon^2}{2}\vv B^\epsilon\bigl(\z,\na(|\vv v|^2)\bigr).
\end{aligned}\eeno

Now, defining
\beq\label{good variable for 2D}
V=\z+i|D|^{-1}\div\vv u.
\eeq
we deduce from \eqref{equation for zeta} and \eqref{equation for u} that
\beq\label{New Bsq for 2D}\begin{aligned}
&\p_tV-i\Lambda_\epsilon V+\epsilon\na\cdot(T_vV)-\f{i\epsilon}{2}|D|(T_\z V)\\
&=-\f{\epsilon}{2}\na\cdot\bigl(T_\z \varphi_{\leq5}(\sqrt\epsilon|D|)\vv v\bigr)-\f{i\epsilon}{2}|D|\bigl(T_\z\varphi_{\leq5}(\sqrt\epsilon|D|)\z\bigr)+N_V^\epsilon,
\end{aligned}\eeq
where $\Lambda_\epsilon=|D|(1+\epsilon\D)$ and
\beno
N_V^\epsilon=-i\epsilon[|D|^{-1}\div,\,T_{\vv v}]\cdot\na\vv u+i\epsilon T_{\div\vv v}(|D|^{-1}\div\vv u)
-\f{\epsilon}{2}|D|\bigl([|D|^{-1}\div,\,T_\z]\vv u\bigr)+N_\z^\epsilon+i|D|^{-1}\div N_{\vv u}^\epsilon.
\eeno

\begin{remark}
The left hand side (l.h.s) of \eqref{New Bsq for 2D} is the quasi-linear part of system \eqref{Bsq2D}. The first two terms in the right hand side (r.h.s) of  \eqref{New Bsq for 2D} are the worst quadratic terms which are of order $O(\sqrt\epsilon)$. The nonlinear terms in $N_V^\epsilon$ are of order $O(\epsilon)$.
\end{remark}

We also remark that we could deduce from the evolution equations of $\z$ and $\vv u$ that
\beq\label{linearized system}
\p_tV-i\Lambda_\epsilon V=-\epsilon \na\cdot(\z\vv v)+\epsilon(1+\epsilon\D)\na\cdot\vv B^\epsilon
+i\epsilon|D|^{-1}\div\bigl[-\f{1}{2}\na(|\vv v|^2)+\vv B^\epsilon(\p_t\z,\vv v)+\vv B^\epsilon(\z,\p_t\vv v)\bigr].
\eeq

\medskip

 Denoting by
\beno
V^+=V,\quad V^-=\overline{V},
\eeno
we shall rewrite the quadratic terms of \eqref{New Bsq for 2D} in terms of $V^+$ and $V^-$. Whereas we keep the remaining  nonlinear terms in $N_V^\epsilon$ in terms of $\z$ and $v$.

By the definition of $V$, we have
\beno
\z=\f12(V^++V^-)=\f12\sum_{\mu\in\{+,-\}}V^\mu,\quad\div\vv u=-\f{i}{2}|D|(V^+-V^-)=-\f{i}{2}|D|\sum_{\nu\in\{+,-\}}\nu V^\nu.
\eeno
Since
\beno
\div\vv u=\div\vv v+\epsilon\div\vv B^\epsilon,\quad\curl\vv v=0,
\eeno
we have
\beno
\vv v=\D^{-1}\na\bigl(\div\vv u-\epsilon\div\vv B^\epsilon\bigr)=\f{i}{2}\sum_{\nu\in\{+,-\}}\nu |D|^{-1}\na V^\nu-\epsilon\D^{-1}\na\div\vv B^\epsilon.
\eeno
Then
we have
\beq\label{zeta v and u}\begin{aligned}
&\z=\f12\sum_{\mu\in\{+,-\}}V^\mu,\quad \wt{\vv v}=\f{i}{2}\sum_{\nu\in\{+,-\}}\nu |D|^{-1}\na V^\nu,\\
&\vv v=\wt{\vv v}-\epsilon\D^{-1}\na\div\vv B^\epsilon,\quad\vv u=\wt{\vv v}-\epsilon\D^{-1}\na\div\vv B^\epsilon+\epsilon\vv B^\epsilon.
\end{aligned}\eeq

Before ending this subsection, we provide a lemma involving the bilinear operator $\vv B^\epsilon(\cdot,\cdot)$.

\begin{lemma}\label{lem for Beps}
Assume that the real-valued function $f\in L^\infty(\R^2)$ and vector function $\vv g\in H^s(\R^2)$ for $s\geq -2$. There hold
\beq\label{estimate for Beps}
\mathcal{F}\bigl(\vv B^\epsilon(f,\vv g)\bigr)(\xi)=\overline{\mathcal{F}\bigl(\vv B^\epsilon(f,\vv g)\bigr)(-\xi)}
\eeq
and for $k=0,1,2$,
\beq\label{estimate for Beps a}\begin{aligned}
&\|\vv B^\epsilon(f,\vv g)\|_{H^{s+k}}\leq C_B\epsilon^{-\f{k}{2}}\|f\|_{L^\infty}\|\vv g\|_{H^s},
\end{aligned}\eeq
where $C_B>0$ is a universal constant.
\end{lemma}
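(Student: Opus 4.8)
The plan is to dispose of the two assertions of Lemma~\ref{lem for Beps} separately: \eqref{estimate for Beps} is an elementary symmetry observation, while \eqref{estimate for Beps a} combines the standard boundedness of the paraproduct $T_f$ on Sobolev spaces with a crude pointwise bound on the Fourier symbol of $(1+\epsilon\D)^{-1}\varphi_{\geq6}(\sqrt\epsilon|D|)$. For \eqref{estimate for Beps}, recall that a tempered distribution $h$ satisfies $\widehat h(\xi)=\overline{\widehat h(-\xi)}$ for all $\xi$ precisely when $h$ is real-valued. Since $f$ and $\vv g$ are real-valued, it suffices to observe that $\vv B^\epsilon(f,\vv g)=\f12 T_f\bigl((1+\epsilon\D)^{-1}\varphi_{\geq6}(\sqrt\epsilon|D|)\vv g\bigr)$ is again real-valued: every operator entering this expression is a Fourier multiplier with a real and even symbol --- the truncations $P_{\leq j-7}$ and $P_j$ built into $T_f$ (symbols $\varphi_{\leq j-7}$, $\varphi_j$) and the multiplier with symbol $\varphi_{\geq6}(\sqrt\epsilon|\xi|)(1-\epsilon|\xi|^2)^{-1}$ --- hence each preserves real-valuedness, as does the pointwise product in the definition of $T_f$; \eqref{estimate for Beps} follows.

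For \eqref{estimate for Beps a}, I would write $\vv B^\epsilon(f,\vv g)=\f12 T_f\vv h$ with $\vv h\eqdefa(1+\epsilon\D)^{-1}\varphi_{\geq6}(\sqrt\epsilon|D|)\vv g$, and first invoke the classical paraproduct bound $\|T_f\vv h\|_{H^\sigma}\lesssim\|f\|_{L^\infty}\|\vv h\|_{H^\sigma}$, valid for every $\sigma\in\R$ (see \cite{BCD}): in $T_f\vv h=\sum_{j}P_{\leq j-7}f\cdot P_j\vv h$ the $j$-th summand has Fourier support in an annulus of size $\sim 2^j$, so the summands are almost orthogonal in $L^2$, while $\|P_{\leq j-7}f\|_{L^\infty}\lesssim\|f\|_{L^\infty}$ uniformly in $j$; squaring and summing yields the bound. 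Taking $\sigma=s+k$ reduces the lemma to the multiplier estimate
\beq\label{plan-mult}
\|\vv h\|_{H^{s+k}}\lesssim\epsilon^{-\f{k}{2}}\|\vv g\|_{H^s},\qquad k=0,1,2.
\eeq

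To prove \eqref{plan-mult} I would argue on the Fourier side, from $\widehat{\vv h}(\xi)=\varphi_{\geq6}(\sqrt\epsilon|\xi|)(1-\epsilon|\xi|^2)^{-1}\widehat{\vv g}(\xi)$. On the support of $\xi\mapsto\varphi_{\geq6}(\sqrt\epsilon|\xi|)$ one has $\sqrt\epsilon|\xi|\gtrsim1$, hence $\epsilon|\xi|^2\gtrsim1$, so there $|1-\epsilon|\xi|^2|\sim\epsilon|\xi|^2$ and $\langle\xi\rangle\sim|\xi|\gtrsim\epsilon^{-\f12}$. Therefore
\beno
\langle\xi\rangle^{k}\,\f{\varphi_{\geq6}(\sqrt\epsilon|\xi|)}{|1-\epsilon|\xi|^2|}\lesssim\f{\langle\xi\rangle^{k-2}}{\epsilon}\lesssim\f{\epsilon^{\f{2-k}{2}}}{\epsilon}=\epsilon^{-\f{k}{2}},\qquad k=0,1,2,
\eeno
the second inequality using $k-2\le0$ together with $\langle\xi\rangle\gtrsim\epsilon^{-\f12}$ on that set. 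Multiplying by $\langle\xi\rangle^{s}|\widehat{\vv g}(\xi)|$ and applying Plancherel's theorem gives \eqref{plan-mult}, and combining with the paraproduct bound yields \eqref{estimate for Beps a} with $C_B$ a universal constant.

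I do not expect a genuine obstacle here; the argument is routine paradifferential calculus once the mechanism is seen. The one point worth care is that the cutoff $\varphi_{\geq6}(\sqrt\epsilon|D|)$ localizes to frequencies $|\xi|\gtrsim\epsilon^{-\f12}$, where $(1+\epsilon\D)^{-1}$ behaves like $(\epsilon|D|^2)^{-1}$ and thus recovers up to two derivatives at the price $\epsilon^{-1}$, so that the restriction $k\le2$ is exactly what prevents $\langle\xi\rangle^{k-2}$ from growing on that range; the symmetry \eqref{estimate for Beps} is then immediate.
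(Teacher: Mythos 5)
Your proof is correct and is essentially the argument the paper has in mind: the paper omits the proof of this lemma, deferring to Lemma 3.1 of \cite{SX2}, and the intended argument is exactly your combination of the $L^\infty\times H^\sigma\to H^\sigma$ paraproduct bound (valid for every $\sigma\in\R$, in particular for $\sigma=s+k\ge -2$) with the observation that the cutoff $\varphi_{\geq6}(\sqrt\epsilon|D|)$ confines the frequencies to $\sqrt\epsilon|\xi|\geq 40$, where $(1+\epsilon\Delta)^{-1}$ gains up to two derivatives at the cost $\epsilon^{-1}$, while the realness of $\vv B^\epsilon(f,\vv g)$ follows since every multiplier involved has a real, even (radial) symbol. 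The only points worth tightening are cosmetic: the step ``$\epsilon|\xi|^2\gtrsim1$, so $|1-\epsilon|\xi|^2|\sim\epsilon|\xi|^2$'' should quote the actual support bound $\epsilon|\xi|^2\geq1600$ so that the denominator is genuinely bounded away from zero, and the symmetry \eqref{estimate for Beps} of course also uses that $\vv g$ is real-valued, which is implicit in the statement and true in the application.
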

The proof of the lemma is similar to that of Lemma 3.1 in \cite{SX2}. We omit the proof here.

\subsection{Main proposition for the symmetric system \eqref{New Bsq for 2D}}
Now, we state the main proposition of this subsection.
\begin{proposition}\label{New Bsq 2D prop}
Assume that $(\z,\vv v)\in H^{N_0}(\R^2)$ with $N_0\geq 5$ solves \eqref{Bsq2D}. Then $V$ defined in \eqref{good variable for 2D} satisfies the following system
\beq\label{symmetric formula}
\p_tV-i\Lambda_\epsilon V=\mathcal{S}_V^\epsilon+\mathcal{Q}_V^\epsilon+\mathcal{C}_V^\epsilon+\mathcal{N}_V^\epsilon,
\eeq
where
\begin{itemize}
\item The quadratic term $\mathcal{S}_V^\epsilon$ is of the form
\beno
\mathcal{S}_V^\epsilon=\sum_{\mu\in\{+,-\}}S_{\mu,+}^\epsilon(V^\mu,V^+),
\eeno
with the symbol $s_{\mu,+}^\epsilon(\xi,\eta)$ of $S_{\mu,+}^\epsilon$   satisfying
\beq\label{symbol s eps}
\overline{s_{\mu,+}^\epsilon(\xi,\eta)}=-s_{\mu,+}^\epsilon(\xi,\eta),
\eeq
\beq\label{bound of s eps}\begin{aligned}
&|\langle\xi\rangle^{-N_0}\langle\eta\rangle^{-N_0}\bigl(\langle\xi\rangle^{2N_0}s_{\mu,+}^\epsilon(\xi,\eta)
-\langle\eta\rangle^{2N_0}s_{-\mu,+}^\epsilon(\eta,\xi)\bigr)|\lesssim\epsilon|\xi-\eta|\cdot\varphi_{\leq -6}\Bigl(\f{|\xi-\eta|}{\max\{|\xi|,|\eta|\}}\Bigr).
\end{aligned}\eeq
\item The quadratic term $\mathcal{Q}_V^\epsilon$ is of the form
\beno
\mathcal{Q}_V^\epsilon=\sum_{\mu,\nu\in\{+,-\}}Q_{\mu,\nu}^\epsilon(V^\mu,V^\nu),
\eeno
with the symbol $q_{\mu,\nu}^\epsilon(\xi,\eta)$ of $Q_{\mu,\nu}^\epsilon$  satisfying
\beq\label{bound of q eps}\begin{aligned}
&|q_{\mu,-}^\epsilon(\xi,\eta)|\lesssim\epsilon|\xi|\cdot\varphi_{\leq 5}\bigl(\sqrt{\epsilon}|\eta|\bigr)\cdot\varphi_{\leq -6}\Bigl(\f{|\xi-\eta|}{|\eta|}\Bigr),\\
&|q_{\mu,+}^\epsilon(\xi,\eta)|\lesssim\epsilon|\xi-\eta|\cdot\varphi_{\leq 5}\bigl(\sqrt{\epsilon}|\eta|\bigr)\cdot\varphi_{\leq -6}\Bigl(\f{|\xi-\eta|}{|\eta|}\Bigr)
\end{aligned}\eeq
\item The cubic term $\mathcal{C}_V^\epsilon=\epsilon^2\na\cdot(T_{\D^{-1}\na\div\vv B^\epsilon}V)$
satisfies
\beq\label{estimate for cubic term eps}
\bigl|\text{Re}\bigl\{\bigl(\langle\na\rangle^{N_0}\mathcal{C}_V^\epsilon\,|\,\langle\na\rangle^{N_0}V\bigr)_2\bigr\}\bigr|
\lesssim\epsilon^2\|\z\|_{H^{N_0}}\|\vv v\|_{H^{N_0}}\|V\|_{H^{N_0}}^2.
\eeq
\item The nonlinear term $\mathcal{N}_V^\epsilon$  satisfies
\beq\label{estimate for remain terms}
\|\mathcal{N}_V^\epsilon\|_{H^{N_0}}\lesssim\epsilon\bigl(1+\|\z\|_{H^{N_0}}\bigr)\bigl(\|\z\|_{H^{N_0}}^2+\|\vv v\|_{H^{N_0}}^2+\|\vv u\|_{H^{N_0}}^2\bigr).
\eeq
\end{itemize}
\end{proposition}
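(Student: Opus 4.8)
The plan is to derive \eqref{symmetric formula} directly from \eqref{New Bsq for 2D}, tracking each of the four types of terms separately and verifying the claimed symbol bounds by the phase analysis already available in Lemma \ref{phase lemma}. First I would record the substitution \eqref{zeta v and u} into the two ``worst'' quadratic terms on the r.h.s.\ of \eqref{New Bsq for 2D}, namely $-\frac{\epsilon}{2}\na\cdot(T_\z\varphi_{\leq5}(\sqrt\epsilon|D|)\vv v)$ and $-\frac{i\epsilon}{2}|D|(T_\z\varphi_{\leq5}(\sqrt\epsilon|D|)\z)$, together with the two paraproduct terms $\epsilon\na\cdot(T_vV)$ and $-\frac{i\epsilon}{2}|D|(T_\z V)$ on the l.h.s. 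Writing $\z=\frac12\sum_\mu V^\mu$ and $\wt{\vv v}=\frac{i}{2}\sum_\nu\nu|D|^{-1}\na V^\nu$, each of these becomes a finite sum of bilinear operators $\mathrm{Bil}(V^\mu,V^\nu)$ whose symbols are explicit: the paraproduct structure $T_\z$ forces the $\xi-\eta$ variable to be the low frequency, which is the source of the cutoff $\varphi_{\leq-6}(|\xi-\eta|/|\eta|)$ (equivalently $|\xi-\eta|\lesssim\min\{|\xi|,|\eta|\}$, with $|\xi|\sim|\eta|$ on the support), and the cutoff $\varphi_{\leq5}(\sqrt\epsilon|\eta|)$ comes from the $\varphi_{\leq5}(\sqrt\epsilon|D|)$ in the source terms (the $\varphi_{\geq6}$ piece having been absorbed into the good unknown $\vv u$ and hence into $\mathcal{C}_V^\epsilon$ and $\mathcal N_V^\epsilon$).

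Next I would split these bilinear contributions according to $\nu$. The $\nu=+$ pieces get collected into $\mathcal S_V^\epsilon$, and the $\nu=-$ pieces into $\mathcal Q_V^\epsilon$, after which one checks that all the remaining symbols of $Q_{\mu,\nu}^\epsilon$ obey \eqref{bound of q eps}. For the $\mu=-$ case this is immediate since each operator carries a factor $|D|$ or $\na\cdot$ acting at frequency $|\xi|\sim|\eta|$, giving the bound $\epsilon|\xi|\,\varphi_{\leq5}(\sqrt\epsilon|\eta|)\varphi_{\leq-6}(|\xi-\eta|/|\eta|)$; for $\mu=+$ one must do better and extract the extra $|\xi-\eta|$ — this is where the structure of the curl-free nonlinearity and the symmetric combination matters. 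The symbol $s_{\mu,+}^\epsilon$ of $\mathcal S_V^\epsilon$ is antisymmetric by construction (this is exactly \eqref{symbol s eps}, and it is the purpose of the symmetrization: the skew-adjointness kills the top-order contribution to the energy identity), and \eqref{bound of s eps} — the statement that the ``symmetrization defect'' $\langle\xi\rangle^{2N_0}s_{\mu,+}^\epsilon(\xi,\eta)-\langle\eta\rangle^{2N_0}s_{-\mu,+}^\epsilon(\eta,\xi)$ is $O(\epsilon|\xi-\eta|)$ on the low-frequency support — follows by a mean value/Taylor expansion in $\xi-\eta$ of the weights $\langle\cdot\rangle^{2N_0}$ together with the explicit symbol formulas, exactly as in the 1D analysis of \cite{SX2}.

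For the cubic term $\mathcal C_V^\epsilon=\epsilon^2\na\cdot(T_{\D^{-1}\na\div\vv B^\epsilon}V)$, I would apply the commutator bound of Lemma \ref{commutator lem 2} and the paraproduct estimates so that, after pairing with $\langle\na\rangle^{N_0}V$ and using self-adjointness of $T_a$ up to controlled commutators, the real part of the energy pairing loses no derivative and is bounded by $\epsilon^2\|\D^{-1}\na\div\vv B^\epsilon\|_{W^{1,\infty}}\|V\|_{H^{N_0}}^2$; then \eqref{estimate for Beps a} with $k=0$ gives $\|\vv B^\epsilon\|_{H^{s}}\lesssim\|\z\|_{L^\infty}\|\vv v\|_{H^s}$, which after Sobolev embedding yields \eqref{estimate for cubic term eps}. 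Finally, for $\mathcal N_V^\epsilon$ — which gathers $N_\z^\epsilon$, $|D|^{-1}\div N_{\vv u}^\epsilon$, the commutators $[|D|^{-1}\div,T_{\vv v}]$, $[|D|^{-1}\div,T_\z]$ and $T_{\div\vv v}(|D|^{-1}\div\vv u)$ — I would estimate term by term in $H^{N_0}$: the commutators are handled by Lemma \ref{commutator lem 2}, the remainder terms $R(\cdot,\cdot)$ by the standard remainder estimate (each is already $O(\epsilon)$ and costs no derivative since $R$ gains regularity), the terms $[\D,T_\z](1+\epsilon\D)^{-1}\varphi_{\geq6}(\sqrt\epsilon|D|)\vv v$ by noting that $(1+\epsilon\D)^{-1}\varphi_{\geq6}(\sqrt\epsilon|D|)$ is a bounded Fourier multiplier of order $-2$ so the $\epsilon^2\D$ is tamed, and the $\vv B^\epsilon$-terms by Lemma \ref{lem for Beps}; collecting the $(1+\|\z\|_{H^{N_0}})$ factor from the cubic-in-$\vv B^\epsilon$ contributions gives \eqref{estimate for remain terms}. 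The main obstacle I anticipate is the sharp bound \eqref{bound of q eps} for $q_{\mu,+}^\epsilon$, i.e.\ showing that the $V^+$ interactions genuinely carry the extra low-frequency multiplier $|\xi-\eta|$ rather than merely $|\xi|$; this is the new 2D difficulty flagged in the introduction and requires carefully combining the explicit symbol of $\Phi_{+,+}^\epsilon$ from \eqref{quadratic phases explicit} with the paraproduct localization, since a naive bound only gives $O(\sqrt\epsilon)$ and would not close the later normal-form argument.
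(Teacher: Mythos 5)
Your overall architecture coincides with the paper's proof: substitute \eqref{zeta v and u} into the quadratic terms of \eqref{New Bsq for 2D}, collect the paraproduct terms from the left-hand side into $\mathcal S_V^\epsilon$ and the $\varphi_{\leq5}(\sqrt\epsilon|D|)$ source terms into $\mathcal Q_V^\epsilon$, prove \eqref{bound of s eps} from the explicit symbol formulas together with the mean-value bounds on the weights $\langle\cdot\rangle^{2N_0}$ and on $\varphi_j(|\xi|)-\varphi_j(|\eta|)$, symmetrize the cubic term inside the energy pairing (your ``self-adjointness of $T_a$ up to commutators'' is the physical-space version of the paper's Fourier-side antisymmetrization, which also crucially uses that $\vv B^\epsilon$ is real-valued, i.e.\ \eqref{estimate for Beps}), and estimate $\mathcal N_V^\epsilon$ term by term via Lemma \ref{commutator lem 2} and Lemma \ref{lem for Beps}. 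All of that is consistent with the paper.

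The genuine gap is precisely the point you flag as your ``main obstacle'': the second bound in \eqref{bound of q eps}. The route you propose --- combining the explicit phase $\Phi_{+,+}^\epsilon$ from \eqref{quadratic phases explicit} with the paraproduct localization --- cannot work, because the phase function does not enter the symbol $q_{\mu,+}^\epsilon$ at all: \eqref{bound of q eps} is a pointwise estimate on a fixed bilinear symbol, not a resonance or normal-form statement. The actual mechanism is algebraic and geometric. Since $\vv v$ is curl-free, $\wt{\vv v}=\frac i2\sum_\nu\nu|D|^{-1}\na V^\nu$, and the two pieces of $\mathcal Q_V^\epsilon$ combine into the single symbol
\beno
q_{\mu,\nu}^\epsilon(\xi,\eta)=\f{i\epsilon}{8}\,\xi\cdot\Bigl(\nu\f{\eta}{|\eta|}-\f{\xi}{|\xi|}\Bigr)\varphi_{\leq5}(\sqrt\epsilon|\eta|)\sum_{j\in\Z}\varphi_{\leq j-7}(|\xi-\eta|)\varphi_j(|\eta|).
\eeno
For $\nu=-$ the bracket has modulus at most $2$ and one only gets $\epsilon|\xi|$. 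For $\nu=+$ the bracket is a difference of unit vectors, hence of modulus $\sim\angle(\xi,\eta)$, and the Sine theorem in the triangle with side lengths $|\xi|,|\eta|,|\xi-\eta|$ gives $|\xi|\sin\bigl(\angle(\xi,\eta)\bigr)=|\xi-\eta|\sin\bigl(\angle(\xi-\eta,\eta)\bigr)\leq|\xi-\eta|$; since $\angle(\xi,\eta)$ is small on the paraproduct support, $\sin\bigl(\angle(\xi,\eta)\bigr)\sim\angle(\xi,\eta)$ and therefore $|\xi|\,\bigl|\f{\eta}{|\eta|}-\f{\xi}{|\xi|}\bigr|\lesssim|\xi-\eta|$. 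That two-line observation is the entire proof of the $|\xi-\eta|$ gain; without it your argument does not establish \eqref{bound of q eps} for $\nu=+$, and consequently does not show that the term $II$ in the subsequent energy estimate is $O(\epsilon t)$ rather than $O(\sqrt\epsilon\,t)$.
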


\begin{remark}
Proposition \ref{New Bsq 2D prop} suggests that there is no loss of derivative  for the nonlinear terms of \eqref{New Bsq for 2D}. Indeed, in the energy estimates, we shall use the symmetric structure of the quadratic terms $\mathcal{S}_V^\epsilon$ to avoid the loss of derivatives (see \eqref{bound of s eps}). We also use the symmetric structure of $\mathcal{C}_V^\epsilon$ to avoid losing derivative (see the proof of \eqref{estimate for cubic term eps}). The quadratic terms $\mathcal{S}_V^\epsilon$, $Q_{\mu,+}^\epsilon$ and the nonlinear term $\mathcal{N}_V^\epsilon$ are of order $O(\epsilon)$. Whereas the quadratic term $Q_{\mu,-}^\epsilon$ is of order $O(\sqrt\epsilon)$ which directly leads to the existence time of order $O(\f{1}{\sqrt\epsilon})$. To enlarge the existence time scalar $\f{1}{\sqrt\epsilon}$, we shall use  norm formal techniques to deal with the worst quadratic term $Q_{\mu,-}^\epsilon$.
\end{remark}

\begin{proof}[ Proof of Proposition \ref{New Bsq 2D prop}]
The nonlinear terms in the r.h.s. of \eqref{symmetric formula} stem from the nonlinear terms in \eqref{New Bsq for 2D}. When we rewrite \eqref{New Bsq for 2D} to \eqref{symmetric formula}, using \eqref{zeta v and u}, the nonlinear terms of \eqref{symmetric formula} firstly read as
\beno\begin{aligned}
&\mathcal{S}_V^\epsilon=-\epsilon\na\cdot(T_{\wt{\vv v}}V)+\f{i\epsilon}{2}|D|(T_\z V),\\
&\mathcal{Q}_V^\epsilon=-\f{\epsilon}{2}\na\cdot\bigl(T_\z \varphi_{\leq5}(\sqrt\epsilon|D|)\wt{\vv v}\bigr)-\f{i\epsilon}{2}|D|\bigl(T_\z \varphi_{\leq5}(\sqrt\epsilon|D|)\z\bigr),\\
&\mathcal{C}_V^\epsilon=\epsilon^2\na\cdot(T_{\D^{-1}\na\div\vv B^\epsilon}V),\\
&\mathcal{N}_V^\epsilon=\f{\epsilon^2}{2}\na\cdot\bigl(T_\z\varphi_{\leq5}(\sqrt\epsilon|D|)\D^{-1}\na\div\vv B^\epsilon\bigr)+N^\epsilon_V.
\end{aligned}\eeno

\medskip

(1) {\it For the quadratic term $\mathcal{S}_V^\epsilon$,}
by virtue of \eqref{zeta v and u}, we rewrite it in terms of $V^+$ and $V^-$ as
\beno
\mathcal{S}_V^\epsilon=\sum_{\mu\in\{+,-\}}S_{\mu,+}^\epsilon(V^\mu,V^+)
\eeno
with
\beno\begin{aligned}
&S_{\mu,+}^\epsilon(V^\mu,V^+)=-\mu\f{i\epsilon}{2}\na\cdot(T_{|D|^{-1}\na V^\mu}V^+)+\f{i\epsilon}{4}|D|(T_{V^\mu} V^+).
\end{aligned}\eeno
By the definition of the para-differential operator, we have
\beno
\mathcal{F}\Bigl(S_{\mu,+}^\epsilon(V^\mu,V^+)\Bigr)(\xi)=\f{1}{4\pi^2}\int_{\R^2}s_{\mu,+}^\epsilon(\xi,\eta)\widehat{V^\mu}(\xi-\eta)\widehat{V^+}(\eta)d\eta.
\eeno
with the symbol  $s_{\mu,+}^\epsilon(\xi,\eta)$ of $S_{\mu,+}^\epsilon(V^\mu,V^+)$ being as
\beq\label{symbol s 2D}
s_{\mu,+}^\epsilon(\xi,\eta)=i\epsilon\bigl(\mu\f12\xi\cdot(\xi-\eta)|\xi-\eta|^{-1}+\f14|\xi|\bigr)
\sum_{j\in\Z}\varphi_{\leq j-7}(|\xi-\eta|)\varphi_j(|\eta|).
\eeq
Then \eqref{symbol s 2D} yields $\overline{s_{\mu,+}^\epsilon(\xi,\eta)}=-s_{\mu,+}^\epsilon(\xi,\eta)$ which is exactly \eqref{symbol s eps}.
By a direct derivation, it is easy to check that
\beq\label{B2}
\sum_{j\in\Z}\varphi_{\leq j-7}(|\xi-\eta|)\varphi_j(|\eta|)\lesssim\varphi_{\leq -6}\Bigl(\f{|\xi-\eta|}{|\eta|}\Bigr).
\eeq
Since
\beno\begin{aligned}
&\quad\langle\xi\rangle^{2N_0}s_{\mu,+}^\epsilon(\xi,\eta)
-\langle\eta\rangle^{2N_0}s_{-\mu,+}^\epsilon(\eta,\xi)\\
&=i\epsilon\langle\xi\rangle^{2N_0}
\Bigl(\mu\f12\xi\cdot(\xi-\eta)|\xi-\eta|^{-1}+\f{1}{4}|\xi|\Bigr)\sum_{j\in\Z}\varphi_{\leq j-7}(|\xi-\eta|)\varphi_j(|\eta|)\\
&\qquad
-i\epsilon\langle\eta\rangle^{2N_0}\Bigl(\mu\f12\eta\cdot(\xi-\eta)|\xi-\eta|^{-1}+\f{1}{4}|\eta|\Bigr)\sum_{j\in\Z}\varphi_{\leq j-7}(|\xi-\eta|)\varphi_j(|\xi|),
\end{aligned}\eeno
and
\beno\begin{aligned}
&|\varphi_j(|\xi|)-\varphi_j(|\eta|)|\lesssim\f{1}{\min\{|\xi|,|\eta|\}}|\xi-\eta|,
\end{aligned}\eeno
using \eqref{B2}, we obtain
\beno
|\langle\xi\rangle^{-N_0}\langle\eta\rangle^{-N_0}\bigl(\langle\xi\rangle^{2N_0}s_{\mu,+}^\epsilon(\xi,\eta)
-\langle\eta\rangle^{2N_0}s_{-\mu,+}^\epsilon(\eta,\xi)\bigr)|\lesssim\epsilon|\xi-\eta|\cdot\varphi_{\leq -6}\Bigl(\f{|\xi-\eta|}{\max\{|\xi|,|\eta|\}}\Bigr).
\eeno
This is exactly \eqref{bound of s eps}.
\medskip

(2) {\it For the quadratic term $\mathcal{Q}_V^\epsilon$,}
by virtue of \eqref{zeta v and u}, we rewrite it in terms of $V^+$ and $V^-$ as
\beno
\mathcal{Q}_V^\epsilon=\sum_{\mu,\nu\in\{+,-\}}Q_{\mu,\nu}^\epsilon(V^\mu,V^\nu),
\eeno
where
\beno
Q_{\mu,\nu}^\epsilon(V^\mu,V^\nu)=-\nu\f{i\epsilon}{8}\na\cdot\bigl(T_{V^\mu}\varphi_{\leq5}(\sqrt\epsilon|D|)|D|^{-1}\na V^\nu\bigr)
-\f{i\epsilon}{8}|D|\bigl(T_{V^\mu}\varphi_{\leq5}(\sqrt\epsilon|D|)V^\nu\bigr).
\eeno
Applying Fourier transformation to $Q_{\mu,\nu}^\epsilon(V^\mu,V^\nu)$, we have
\beno\begin{aligned}
\mathcal{F}\Bigl(Q_{\mu,\nu}^\epsilon(V^\mu,V^\nu)\Bigr)(\xi)=\f{1}{4\pi^2}
\int_{\R^2}q_{\mu,\nu}^\epsilon(\xi,\eta)\widehat{V^\mu}(\xi-\eta)\widehat{V^\nu}(\eta)d\eta,
\end{aligned}\eeno
with the symbol $q_{\mu,\nu}^\epsilon(\xi,\eta)$ of $Q_{\mu,\nu}^\epsilon(V^\mu,V^\nu)$  being as follows
\beq\label{symbol q eps}
q_{\mu,\nu}^\epsilon(\xi,\eta)=\f{i\epsilon}{8}\xi\cdot\bigl(\nu\f{\eta}{|\eta|}-\f{\xi}{|\xi|}\bigr)
\varphi_{\leq5}(\sqrt\epsilon|\eta|)
\sum_{j\in\Z}\varphi_{\leq j-7}(|\xi-\eta|)\varphi_j(|\eta|).
\eeq

Thanks to \eqref{B2} and \eqref{symbol q eps}, we obtain
\beno
|q_{\mu,-}^\epsilon(\xi,\eta)|\lesssim\epsilon|\xi|\varphi_{\leq5}(\sqrt\epsilon|\eta|)\varphi_{\leq-6}\Bigl(\f{|\xi-\eta|}{|\eta|}\Bigr),
\eeno
which is the first part of \eqref{bound of q eps}. And we also have
\beq\label{B3}
|q_{\mu,+}^\epsilon(\xi,\eta)|\lesssim\epsilon|\xi|\angle(\xi,\eta)
\varphi_{\leq5}(\sqrt\epsilon|\eta|)\varphi_{\leq-6}\Bigl(\f{|\xi-\eta|}{|\eta|}\Bigr),
\eeq
where we used the fact that $\Bigl|\f{\eta}{|\eta|}-\f{\xi}{|\xi|}\Bigr|\sim\angle(\xi,\eta)$.
Moreover, we have
\beq\label{supp of q}\begin{aligned}
\mathrm{supp}\,(q_{\mu,\nu}^\epsilon)&\subset\{(\xi,\eta)\in\R^2\times\R^2\,|\,\sqrt\epsilon|\eta|\leq 64,\,|\xi-\eta|\leq\f{1}{32}|\eta| \}\\
&\subset\{(\xi,\eta)\in\R^2\times\R^2\,|\,\sqrt\epsilon|\eta|\leq 64,\,\f{31}{32}|\eta|\leq|\xi|\leq\f{33}{32}|\eta|\}.
\end{aligned}\eeq

For $q_{\mu,+}^\epsilon(\xi,\eta)$, we only consider $\angle(\xi,\eta)\neq0$. Thanks to Sine theorem, we have
\beno
\f{|\xi-\eta|}{\sin\bigl(\angle(\xi,\eta)\bigr)}=\f{|\eta|}{\sin\bigl(\angle(\xi,\xi-\eta)\bigr)}
=\f{|\xi|}{\sin\bigl(\angle(\xi-\eta,\eta)\bigr)}.
\eeno
By virtue of \eqref{supp of q}, we see that $\angle(\xi,\eta)<\f{\pi}{2}$ so that $\sin\bigl(\angle(\xi,\eta)\bigr)\sim\angle(\xi,\eta)$. Then we have
\beno
|\xi|\angle(\xi,\eta)\sim |\xi-\eta|\sin\bigl(\angle(\xi-\eta,\eta)\bigr),
\eeno
which along with \eqref{B3} implies
\beno
|q_{\mu,+}^\epsilon(\xi,\eta)|\lesssim\epsilon|\xi-\eta|\cdot\varphi_{\leq5}(\sqrt\epsilon|\eta|)\varphi_{\leq-6}\Bigl(\f{|\xi-\eta|}{|\eta|}\Bigr).
\eeno
This is the second part of \eqref{bound of q eps}.

\medskip

(3) {\it For the cubic term $\mathcal{C}_V^\epsilon$,} we first have
\beno\begin{aligned}
&\widehat{\mathcal{C}_V^\epsilon}(\xi)=\f{i\epsilon^2}{4\pi^2}\int_{\R^2}\bigl(\xi\cdot\f{\xi-\eta}{|\xi-\eta|^2}\bigr)\cdot
\bigl((\xi-\eta)\cdot\widehat{\vv B^\epsilon}(\xi-\eta)\bigr)\sum_{j\in\Z}\varphi_{\leq j-7}(|\xi-\eta|)\varphi_j(|\eta|)\widehat{V}(\eta)d\eta.
\end{aligned}\eeno
Then there holds
\beno\begin{aligned}
&\bigl(\langle\na\rangle^{N_0}\mathcal{C}_V^\epsilon\,|\,\langle\na\rangle^{N_0}V\bigr)_2
=\f{i\epsilon^2}{(2\pi)^4}\int_{\R^4}\langle\xi\rangle^{2N_0}\bigl(\xi\cdot\f{\xi-\eta}{|\xi-\eta|^2}\bigr)\cdot
\bigl((\xi-\eta)\cdot\widehat{\vv B^\epsilon}(\xi-\eta)\bigr)\\
&\qquad\cdot\sum_{j\in\Z}\varphi_{\leq j-7}(|\xi-\eta|)\varphi_j(|\eta|)\widehat{V}(\eta)\overline{\widehat{V}(\xi)}d\eta d\xi
\end{aligned}\eeno
and
\beno\begin{aligned}
&\overline{\bigl(\langle\na\rangle^{N_0}\mathcal{C}_V^\epsilon\,|\,\langle\na\rangle^{N_0}V\bigr)_2}\\
&=-\f{i\epsilon^2}{(2\pi)^4}\int_{\R^4}\langle\xi\rangle^{2N_0}\bigl(\xi\cdot\f{\xi-\eta}{|\xi-\eta|^2}\bigr)\cdot
\bigl((\xi-\eta)\cdot\overline{\widehat{\vv B^\epsilon}(\xi-\eta)}\bigr)\sum_{j\in\Z}\varphi_{\leq j-7}(|\xi-\eta|)\varphi_j(|\eta|)\widehat{V}(\xi)\overline{\widehat{V}(\eta)}d\eta d\xi\\
&=-\f{i\epsilon^2}{(2\pi)^4}\int_{\R^4}\langle\eta\rangle^{2N_0}\bigl(\eta\cdot\f{\xi-\eta}{|\xi-\eta|^2}\bigr)\cdot
\bigl((\xi-\eta)\cdot\overline{\widehat{\vv B^\epsilon}(\eta-\xi)}\bigr)\sum_{j\in\Z}\varphi_{\leq j-7}(|\xi-\eta|)\varphi_j(|\xi|)\widehat{V}(\eta)\overline{\widehat{V}(\xi)}d\eta d\xi.
\end{aligned}\eeno

Thanks to \eqref{estimate for Beps}, we have
\beno
\overline{\widehat{\vv B^\epsilon}(\eta-\xi)}=\widehat{\vv B^\epsilon}(\xi-\eta)
\eeno
which leads to
\beno\begin{aligned}
&\text{Re}\bigl\{\bigl(\langle\na\rangle^{N_0}\mathcal{C}_V^\epsilon\,|\,\langle\na\rangle^{N_0}V\bigr)_2\}
=\f{i\epsilon^2}{2(2\pi)^4}\int_{\R^4}\bigl({\vv c}^\epsilon(\xi,\eta)\cdot\widehat{\vv B^\epsilon}(\xi-\eta)\bigr)\cdot\langle\eta\rangle^{N_0}\widehat{V}(\eta)\cdot\overline{\langle\xi\rangle^{N_0}\widehat{V}(\xi)}d\eta d\xi,
\end{aligned}\eeno
where
\beno\begin{aligned}
{\vv c}^\epsilon(\xi,\eta)&=\langle\xi\rangle^{-N_0}\langle\eta\rangle^{-N_0}\sum_{j\in\Z}\varphi_{\leq j-7}(|\xi-\eta|)\Bigl(\langle\xi\rangle^{2N_0}\bigl(\xi\cdot(\xi-\eta)\bigr)\varphi_j(|\eta|)\\
&\qquad
-\langle\eta\rangle^{2N_0}\bigl(\eta\cdot(\xi-\eta)\bigr)\varphi_j(|\xi|)\Bigr)\f{\xi-\eta}{|\xi-\eta|^2}.
\end{aligned}\eeno

Since for fixed $(\xi,\eta)$,
\beno
|\varphi_j(|\xi|)-\varphi_j(|\eta|)|\lesssim\f{1}{\min\{|\xi|,|\eta|\}}|\xi-\eta|,
\eeno
and the summation in ${\vv c}^\epsilon(\xi,\eta)$ is finite, using \eqref{B2}, we have $|\xi|\sim|\eta|$ and
\beno
|{\vv c}^\epsilon(\xi,\eta)|\lesssim|\xi-\eta|.
\eeno
Then we obtain
\beno\begin{aligned}
&\bigl|\text{Re}\bigl\{\bigl(\langle\na\rangle^{N_0}\mathcal{C}_V^\epsilon\,|\,\langle\na\rangle^{N_0}V\bigr)_2\}\bigr|
\lesssim\epsilon^2\int_{\R^4}|\xi-\eta||\widehat{\vv B^\epsilon}(\xi-\eta)|\cdot\langle\eta\rangle^{N_0}|\widehat{V}(\eta)|
\cdot\langle\xi\rangle^{N_0}|\widehat{V}(\xi)|d\eta d\xi\\
&\lesssim\epsilon^2\|\xi\widehat{\vv B^\epsilon}(\xi)\|_{L^1}\|\langle\xi\rangle^{N_0}\widehat{V}(\xi)\|_{L^2}^2
\lesssim\epsilon^2\|\langle\xi\rangle^3\widehat{\vv B^\epsilon}(\xi)\|_{L^2}\|V\|_{H^{N_0}}^2\lesssim\epsilon^2\|\vv B^\epsilon\|_{H^3}\|V\|_{H^{N_0}}^2,
\end{aligned}\eeno
which along with \eqref{estimate for Beps a} implies
\beno
\bigl|\text{Re}\bigl\{\bigl(\langle\na\rangle^{N_0}\mathcal{C}_V^\epsilon\,|\,\langle\na\rangle^{N_0}V\bigr)_2\}\bigr|
\lesssim\epsilon^2\|\z\|_{L^\infty}\|\vv v\|_{H^3}\|V\|_{H^{N_0}}^2\lesssim\epsilon^2\|\z\|_{H^{N_0}}\|\vv v\|_{H^{N_0}}\|V\|_{H^{N_0}}^2.
\eeno
This is \eqref{estimate for cubic term eps}.

\medskip

(4) {\it For the nonlinear term $\mathcal{N}_V^\epsilon$,}   we estimate it term by term.

Firstly, by definition of para-differential operators and \eqref{estimate for Beps a}, we have
\beno\begin{aligned}
&\epsilon^2\|\na\cdot\bigl(T_\z\varphi_{\leq5}(\sqrt\epsilon|D|)\D^{-1}\na\div\vv B^\epsilon\bigr)\|_{H^{N_0}}\\
&
\lesssim\epsilon^2\|\z\|_{L^\infty}\|\vv B^\epsilon\|_{H^{N_0+1}}
\lesssim\epsilon^{\f32}\|\z\|_{L^\infty}^2\|\vv v\|_{H^{N_0}}\lesssim\epsilon^{\f32}\|\z\|_{H^{N_0}}^2\|\vv v\|_{H^{N_0}}.
\end{aligned}\eeno
Using \eqref{commutator}, we have
\beno\begin{aligned}
&\epsilon\|[|D|^{-1}\div,T_{\vv v}]\cdot\na\vv u\|_{H^{N_0}}\lesssim\epsilon\|\na\vv v\|_{L^\infty}\|\vv u\|_{H^{N_0}}
\lesssim\epsilon\|\vv v\|_{H^{N_0}}\|\vv u\|_{H^{N_0}},\\
&\f{\epsilon}{2}\||D|\bigl([|D|^{-1}\div,\,T_\z]\vv u\bigr)\|_{H^{N_0}}\lesssim\epsilon\|\na\z\|_{L^\infty}\|\vv u\|_{H^{N_0}}
\lesssim\epsilon\|\z\|_{H^{N_0}}\|\vv u\|_{H^{N_0}}.
\end{aligned}\eeno
By the definition of para-differential operators, we have
\beno
\epsilon \|T_{\div\vv v}(|D|^{-1}\div\vv u)\|_{H^{N_0}}\lesssim\epsilon\|\na\vv v\|_{L^\infty}\|\vv u\|_{H^{N_0}}
\lesssim\epsilon\|\vv v\|_{H^{N_0}}\|\vv u\|_{H^{N_0}}.
\eeno
Then we obtain
\beq\label{B4}
\|\mathcal{N}_V^\epsilon\|_{H^{N_0}}\lesssim\epsilon\bigl(\|\vv v\|_{H^{N_0}}+\|\z\|_{H^{N_0}}\bigr)\|\vv u\|_{H^{N_0}}
+\epsilon\|\z\|_{H^{N_0}}^2\|\vv v\|_{H^{N_0}}
+\|N_\z^\epsilon\|_{H^{N_0}}+\|N_{\vv u}^\epsilon\|_{H^{N_0}}.
\eeq

For $N_\z^\epsilon$, due to the definition of para-differential operators and \eqref{estimate for Beps a}, we have
\beno\begin{aligned}
&\|\epsilon\na\cdot\bigl(R(\z,\vv v)\bigr)+\f{\epsilon^2}{2}\na\cdot\bigl(T_\z \vv B^\epsilon\bigr)\|_{H^{N_0}}
\lesssim\epsilon\|\na\z\|_{L^\infty}\|\vv v\|_{H^{N_0}}+\epsilon^2\|\z\|_{L^\infty}\|\vv B^\epsilon(\z,\vv v)\|_{H^{N_0+1}}\\
&\quad\lesssim\epsilon\|\na\z\|_{L^\infty}\|\vv v\|_{H^{N_0}}+\epsilon^{\f32}\|\z\|_{L^\infty}^2\|\vv v\|_{H^{N_0}}
\lesssim\epsilon\bigl(\|\z\|_{H^{N_0}}+\|\z\|_{H^{N_0}}^2\bigr)\|\vv v\|_{H^{N_0}},
\end{aligned}\eeno
and
\beno\begin{aligned}
&\f{\epsilon^2}{2}\|\na\cdot\bigl([\D,T_\z](1+\epsilon\D)^{-1}\varphi_{\geq6}(\sqrt\epsilon|D|)\vv v\bigr)\|_{H^{N_0}}\lesssim\epsilon\|\na\z\|_{L^\infty}\|\vv v\|_{H^{N_0}}\lesssim\epsilon\|\z\|_{H^{N_0}}\|\vv v\|_{H^{N_0}},
\end{aligned}\eeno
which imply
\beq\label{B5}
\|N_\z^\epsilon\|_{H^{N_0}}\lesssim\epsilon\bigl(1+\|\z\|_{H^{N_0}}\bigr)\|\z\|_{H^{N_0}}\|\vv v\|_{H^{N_0}}.
\eeq

For $N_{\vv u}^\epsilon$, by virtue of  the definition of para-differential operators, we have
\beno\begin{aligned}
&\|\f{\epsilon}{2}T_{\na\z}\varphi_{\geq6}(\sqrt\epsilon|D|)\z-\epsilon T_{\na\vv v}\cdot\vv v-\epsilon R(\vv v\cdot,\na\vv v)\|_{H^{N_0}}\\
&\quad\lesssim\epsilon\bigl(\|\na\z\|_{L^\infty}\|\z\|_{H^{N_0}}+\|\na\vv v\|_{L^\infty}\|\vv v\|_{H^{N_0}}\bigr)
\lesssim\epsilon\bigl(\|\z\|_{H^{N_0}}^2+\|\vv v\|_{H^{N_0}}^2\bigr),
\end{aligned}\eeno
By the definition of $\vv B^\epsilon(\cdot,\cdot)$, we have
\beno\begin{aligned}
&\epsilon\|\vv B^\epsilon\bigl((1+\epsilon\D)\na\cdot\vv v,\vv v\bigr)\|_{H^{N_0}}
=\f{\epsilon}{2}\|T_{(1+\epsilon\D)\na\cdot\vv v}\bigl((1+\epsilon\D)^{-1}\varphi_{\geq6}(\sqrt\epsilon|D|)\vv v\bigr)\|_{H^{N_0}}\\
&\quad
\lesssim\epsilon\|\na\vv v\|_{L^\infty}\|\vv v\|_{H^{N_0}}\lesssim\epsilon\|\vv v\|_{H^{N_0}}^2.
\end{aligned}\eeno

Using the definition of $\vv B^\epsilon(\cdot,\cdot)$ again and  \eqref{estimate for Beps a}, we have
\beno\begin{aligned}
&\epsilon^2\|T_{\vv v}\cdot\na\vv B^\epsilon\|_{H^{N_0}}\lesssim\epsilon^2\|\vv v\|_{L^\infty}\|\vv B^\epsilon(\z,\vv v)\|_{H^{N_0+1}}
\lesssim\epsilon^{\f32}\|\vv v\|_{L^\infty}\|\z\|_{L^\infty}\|\vv v\|_{H^{N_0}}
\lesssim\epsilon^{\f32}\|\z\|_{H^{N_0}}\|\vv v\|_{H^{N_0}}^2,\\
&\epsilon^2\|\vv B^\epsilon\bigl(\na\cdot(\z\vv v),\vv v\bigr)\|_{H^{N_0}}\lesssim\epsilon^2\|\na\cdot(\z\vv v)\|_{L^\infty}\|\vv v\|_{H^{N_0}}
\lesssim\epsilon^2\|\z\|_{H^{N_0}}\|\vv v\|_{H^{N_0}}^2,\\
&\f{\epsilon^2}{2}\|\vv B^\epsilon\bigl(\z,\na(|\vv v|^2)\bigr)\|_{H^{N_0}}\lesssim\epsilon^{\f32}\|\z\|_{L^\infty}\|\na(|\vv v|^2)\|_{H^{N_0-1}}
\lesssim\epsilon^{\f32}\|\z\|_{H^{N_0}}\|\vv v\|_{H^{N_0}}^2.
\end{aligned}\eeno
Then we obtain
\beq\label{B6}
\|N_{\vv u}^\epsilon\|_{H^{N_0}}\lesssim\epsilon\|\z\|_{H^{N_0}}^2+\epsilon\bigl(1+\|\z\|_{H^{N_0}}\bigr)\|\vv v\|_{H^{N_0}}^2.
\eeq

Combining \eqref{B4}, \eqref{B5} and \eqref{B6}, we have
\beno
\|\mathcal{N}_V^\epsilon\|_{H^{N_0}}\lesssim\epsilon\bigl(1+\|\z\|_{H^{N_0}}\bigr)\bigl(\|\z\|_{H^{N_0}}^2+\|\vv v\|_{H^{N_0}}^2+\|\vv u\|_{H^{N_0}}^2\bigr).
\eeno
This is exactly \eqref{estimate for remain terms}.

Therefore, we complete the proof of the proposition.
\end{proof}

\setcounter{equation}{0}
\section{Proof of Theorem \ref{main theorem}}

In this section, we shall prove the main Theorem \ref{main theorem} via the symmetric formulation \eqref{New Bsq for 2D} and the normal form techniques. The proof relies on the standard continuity argument and the main priori estimates for \eqref{Bsq2D} which are stated in the following subsections.

\subsection{Ansatz  for the continuity arguments}
The first ansatz for the continuity arguments involves the amplitude $\z$ as follows
\beq\label{ansatz 1}
\epsilon\|\z\|_{L^\infty}\leq\f{1}{2C_B},\quad \text{for any}\, t\in[0,T_0\epsilon^{-\f23}],
\eeq
where $C_B$ is a constant stated in Lemma \ref{lem for Beps}.

Let us define the energy functional for \eqref{Bsq2D} as
\beno
\mathcal{E}_{N_0}(t)\eqdefa\|\z(t,\cdot)\|_{H^{N_0}}^2+\|\vv v(t,\cdot)\|_{H^{N_0}}^2.
\eeno
For simplicity and without loss of generality, we assume that
\beq\label{initial energy}
\mathcal{E}_{N_0}(0)\eqdefa\|\z_0\|_{H^{N_0}}^2+\|\vv v_0\|_{H^{N_0}}^2=1.
\eeq

The second ansatz is about the energy which reads
\beq\label{ansatz 2}
\mathcal{E}_{N_0}(t)\leq 2C_0,\quad \text{for any}\, t\in[0,T_0\epsilon^{-\f23}],
\eeq
where $C_0>1$ is an universal constant that will be determined at the end of the proof. We take
\beno
T_0=\f{C_1}{C_2},\quad C_0=2C_1,
\eeno
where $C_1$ and $C_2$ are the constants stated in the following Proposition \ref{priori energy estimate prop}.

The standard continuity argument shows that: since for sufficiently small $\epsilon>0$,
\beno
\mathcal{E}_{N_0}(0)=1\leq 2C_0,\quad\epsilon\|\z_0\|_{L^\infty}\leq\f{1}{4C_B}\leq\f{1}{2C_B},
\eeno
ansatz \eqref{ansatz 1} and \eqref{ansatz 2} hold on a short time interval $[0,t^*)$, where $t^*$ is the maximal possible time on which \eqref{ansatz 1} and \eqref{ansatz 2} are correct. Without loss of generality, we assume that $t^*=T_0\epsilon^{-\f23}$.

To close the continuity argument, we need to verify that: {\it there exists a sufficiently small $\epsilon_0>0$ such that for any $\epsilon\in(0,\epsilon_0)$, we improve the ansatz \eqref{ansatz 1} and \eqref{ansatz 2} as follows
\beq\label{ansatz 1a}
\epsilon\|\z\|_{L^\infty}\leq\f{1}{4C_B},\quad \text{for any}\, t\in[0,T_0\epsilon^{-\f23}],
\eeq
\beq\label{ansatz 2a}
\mathcal{E}_{N_0}(t)\leq C_0,\quad \text{for any}\, t\in[0,T_0\epsilon^{-\f23}].
\eeq
}

\medskip

Theorem \ref{main theorem} follows from the above continuity argument and the local regularity theorem. To complete the last step of the continuity argument that improves the constants in the ansatz, we need the following   Proposition \ref{priori energy estimate prop} that concerns the {\it a priori} energy estimates on \eqref{Bsq2D}.

\subsection{The {\it a priori} energy estimates.}
In this subsection, we shall derive the {\it a priori} energy estimates on the solutions of \eqref{Bsq2D}-\eqref{initial}. The main result is stated in the following proposition.

\begin{proposition}\label{priori energy estimate prop}
Under the ansatz \eqref{ansatz 1} and \eqref{ansatz 2}, the solution $(\z,\vv v)$ to \eqref{Bsq2D}-\eqref{initial} satisfies
\beq\label{priori energy}
\mathcal{E}_{N_0}(t)\leq C_1+C_2\epsilon^{\f23}t,\quad\text{for any }\, t\in(0,T_0\epsilon^{-\f23}],
\eeq
where $C_1$ and $C_2$ are universal constants, and $T_0=\f{C_1}{C_2}$.
\end{proposition}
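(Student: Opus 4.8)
The plan is to run a standard energy estimate on the symmetric equation \eqref{symmetric formula} from Proposition \ref{New Bsq 2D prop}, isolating the one genuinely dangerous contribution — the quadratic term $\mathcal{Q}_V^\epsilon$ built from $Q_{\mu,-}^\epsilon$ — and then to remove that contribution by normal‑form/integration‑by‑parts arguments exactly along the lines sketched in the introduction. Concretely: first I would establish the equivalence $\|V\|_{H^{N_0}}^2 \sim \mathcal{E}_{N_0}(t)$ (this follows from \eqref{good variable for 2D}, \eqref{good unknowns for 2D} and Lemma \ref{lem for Beps} together with ansatz \eqref{ansatz 1}, which keeps $\vv u$, $\vv v$, $\z$ and $V$ all comparable). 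Then, applying $\langle\na\rangle^{N_0}$ to \eqref{symmetric formula}, taking the $L^2$ inner product with $\langle\na\rangle^{N_0}V$, and taking real parts, the skew‑symmetric transport/dispersive part $i\Lambda_\epsilon$ drops out, and I get
\beno
\f{d}{dt}\|V\|_{H^{N_0}}^2 = 2\,\text{Re}\bigl(\langle\na\rangle^{N_0}\mathcal{S}_V^\epsilon\mid\langle\na\rangle^{N_0}V\bigr)_2
+2\,\text{Re}\bigl(\langle\na\rangle^{N_0}\mathcal{Q}_V^\epsilon\mid\langle\na\rangle^{N_0}V\bigr)_2
+2\,\text{Re}\bigl(\langle\na\rangle^{N_0}\mathcal{C}_V^\epsilon\mid\langle\na\rangle^{N_0}V\bigr)_2
+2\,\text{Re}\bigl(\langle\na\rangle^{N_0}\mathcal{N}_V^\epsilon\mid\langle\na\rangle^{N_0}V\bigr)_2.
\eeno

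The terms $\mathcal{C}_V^\epsilon$ and $\mathcal{N}_V^\epsilon$ are immediately controlled: \eqref{estimate for cubic term eps} and \eqref{estimate for remain terms}, combined with ansatz \eqref{ansatz 2}, bound their contributions by $C\epsilon\,\mathcal{E}_{N_0}(t)^{3/2}\lesssim\epsilon$, which integrated over $[0,t]$ with $t\le T_0\epsilon^{-2/3}$ gives $O(\epsilon^{1/3})$, well within the claimed budget. For the symmetric quadratic term $\mathcal{S}_V^\epsilon=\sum_\mu S_{\mu,+}^\epsilon(V^\mu,V^+)$, the point is that after symmetrizing the bilinear pairing in $(\xi,\eta)$ one pays only the antisymmetrized symbol; the cancellation \eqref{symbol s eps} together with the commutator‑type bound \eqref{bound of s eps} converts the naive $O(\sqrt\epsilon)$ loss into a genuine $O(\epsilon)$ factor times the "low‑frequency" multiplier $|\xi-\eta|\varphi_{\le-6}(|\xi-\eta|/\max\{|\xi|,|\eta|\})$, which is harmless (it is bounded by a fixed power of $\langle\xi-\eta\rangle$ acting on the low‑frequency input). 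Hence $\mathcal{S}_V^\epsilon$ also contributes $\lesssim\epsilon\,\mathcal{E}_{N_0}^{3/2}$, i.e.\ $O(\epsilon^{1/3})$ after integration. The same reasoning handles $Q_{\mu,+}^\epsilon$ using the second line of \eqref{bound of q eps}.

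The entire difficulty is therefore concentrated in $A:=\int_0^t 2\,\text{Re}\bigl(\langle\na\rangle^{N_0}\sum_\mu Q_{\mu,-}^\epsilon(V^\mu,V^-)\mid\langle\na\rangle^{N_0}V\bigr)_2\,d\tau$, for which the symbol bound \eqref{bound of q eps} only gives $O(\sqrt\epsilon)$ per unit time. Here I would pass to profiles $f=e^{-it\Lambda_\epsilon}V$, $g=\langle\na\rangle^{N_0}f$, write $A$ as a space‑time oscillatory integral with phase $\Phi_{\mu,-}^\epsilon$, and use Lemma \ref{phase lemma} to factor $\Phi_{+,-}^\epsilon = \f{|\xi||\eta|}{|\xi|+|\xi-\eta|+|\eta|}\phi_{+,-}^\epsilon$ (and the analogous formula for $\Phi_{-,-}^\epsilon$). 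Then I would split the $(\xi,\eta)$ domain into the three regimes from the introduction — (1) low frequency $\sqrt\epsilon|\eta|\le\tfrac12$, where $|\phi^\epsilon|\sim1$ so $|\tilde q^\epsilon/\Phi^\epsilon|\lesssim\epsilon$ and a single integration by parts in $\tau$ (using $\||D|^{-1}\p_t f\|_{H^{N_0}}\lesssim\epsilon$) gives $\lesssim\epsilon+\epsilon^{3/2}t$; (2) moderate frequency with $|\phi^\epsilon|\ge2^{-D-1}$, where the same integration by parts gives $\lesssim 2^D(\epsilon+\epsilon^{3/2}t)$; and (3) moderate frequency near the resonance $|\phi^\epsilon|\le2^{-D}$, further split by $|\xi-\eta|/|\eta|\lessgtr 2^{-K}$. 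In sub‑case (3i), $|\tilde q^\epsilon|\lesssim\sqrt\epsilon$ but the volume is small: by the Sine theorem $\angle(\xi,\eta)\lesssim 2^{-K}$, so the angular‑localization bilinear estimate Lemma \ref{biliear lemma} yields the gain $2^{-K/2}$ and a bound $\lesssim\sqrt\epsilon\,2^{-K/2}t$; in sub‑case (3ii), the factor $|\xi-\eta|$ in \eqref{bound of q eps} is a low‑frequency multiplier so $|\tilde q^\epsilon|\lesssim\epsilon 2^K|\xi-\eta|$, and estimating the volume of $\{|\phi^\epsilon|\le2^{-D}\}$ by the size of the phase (its $\eta$‑gradient is nondegenerate in this regime) gives $\lesssim\epsilon^{3/4}2^{3K/2}2^{-D/2}t$. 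Summing and optimizing $K,D$ (roughly $2^D\sim\epsilon^{-1/2}$, $2^K\sim\epsilon^{-1/6}$, so that $2^D\epsilon^{3/2}\sim\epsilon$, $\sqrt\epsilon\,2^{-K/2}\sim\epsilon^{5/12}$‑type balances land at $\epsilon^{2/3}$) produces $|A|\lesssim\epsilon^{2/3}+\epsilon^{2/3}t\lesssim\epsilon^{2/3}t$ on $t\le T_0\epsilon^{-2/3}$. Collecting all contributions gives $\mathcal{E}_{N_0}(t)\le C_1+C_2\epsilon^{2/3}t$ with $C_1$ essentially $\mathcal{E}_{N_0}(0)$ plus fixed constants and $C_2$ the implied constant in $|A|$; choosing $T_0=C_1/C_2$ is then automatic.

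The main obstacle is unambiguously regime (3): one must control a degenerate oscillatory integral near a nontrivial zero set of the phase, and the naive volume estimate in two space dimensions is too lossy (as the final remark in the introduction explains). The resolution is the two‑fold split of (3) together with the angular‑localization estimate Lemma \ref{biliear lemma} — trading the size of the symbol against the size of the resonance set flexibly, rather than once — and verifying that the $\eta$‑derivative of $\phi_{\mu,-}^\epsilon$ is genuinely bounded below on the relevant dyadic shells so that the sublevel‑set volume bound $|\{|\phi^\epsilon|\le2^{-D}\}|\lesssim 2^{-D}\times(\text{shell volume})$ is valid; this last point relies on the explicit formulas \eqref{phi + -}–\eqref{phi - -} and the support constraints \eqref{supp of q}. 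Everything else is bookkeeping with Littlewood–Paley projections, Bernstein inequalities, and the profile bounds \eqref{M5}–\eqref{M7}, which I would assume from Section 2 and the discussion above.
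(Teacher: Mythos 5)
Your proposal is correct and follows essentially the same route as the paper: the same energy identity with the $O(\epsilon)$ terms $\mathcal{S}_V^\epsilon$, $Q_{\mu,+}^\epsilon$, $\mathcal{C}_V^\epsilon$, $\mathcal{N}_V^\epsilon$ handled by symmetrization and the bounds of Proposition \ref{New Bsq 2D prop}, and the same three-regime normal-form decomposition of the $Q_{\mu,-}^\epsilon$ contribution with the further split of the near-resonance regime, using Lemma \ref{biliear lemma} for the small-angle piece and the nondegeneracy of $\p_{r_\eta}\phi_{\mu,-}^\epsilon$ for the sublevel-set volume. Only your illustrative choice of parameters is off (with $2^D\sim\epsilon^{-1/2}$, $2^K\sim\epsilon^{-1/6}$ regime (3i) gives $\epsilon^{7/12}t$, not $\epsilon^{2/3}t$); the correct balance, as in the paper, is $2^K\sim 2^{D/4}\epsilon^{-1/8}$ and $2^D\sim\epsilon^{-5/6}$, which does land at $\epsilon^{2/3}t$.
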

\begin{proof}
We divide the proof into several steps.

\medskip

{\bf Step 1. Energy estimates.}  Firstly, thanks to \eqref{estimate for Beps a} and \eqref{ansatz 1}, we have
\beno
\epsilon\|\vv B^\epsilon(\z,\vv v)\|_{H^{N_0}}\leq C_B\epsilon\|\z\|_{L^\infty}\|\vv v\|_{H^{N_0}}\leq\f{1}{2}\|\vv v\|_{H^{N_0}},
\eeno
which along with \eqref{good unknowns for 2D} and \eqref{good variable for 2D} implies
\beq\label{equivalent energy 1}
\mathcal{E}_{N_0}(t)\sim \|\z\|_{H^{N_0}}^2+\|\vv u\|_{H^{N_0}}^2\sim\|V\|_{H^{N_0}}^2.
\eeq

With \eqref{equivalent energy 1}, we start to derive the energy estimates on the dispersive formulation \eqref{symmetric formula} as follows
\beno\begin{aligned}
&\f12\f{d}{dt}\|V(t)\|_{H^{N_0}}^2=\text{Re}\{\bigl(\langle \na\rangle^{N_0}\mathcal{S}^\epsilon_V\,|\,\langle\na\rangle^{N_0}V\bigr)_2
+\bigl(\langle\na\rangle^{N_0}\mathcal{Q}^\epsilon_V\,|\,\langle\na\rangle^{N_0}V\bigr)_2\\
&\qquad
+\bigl(\langle\na\rangle^{N_0}\mathcal{C}^\epsilon_V\,|\,\langle\na\rangle^{N_0}V\bigr)_2
+\bigl(\langle\na\rangle^{N_0}\mathcal{N}^\epsilon_V\,|\,\langle\na\rangle^{N_0}V\bigr)_2\}.
\end{aligned}\eeno

Thanks to the estimates \eqref{estimate for cubic term eps} and \eqref{estimate for remain terms} in Proposition \ref{New Bsq 2D prop}, using \eqref{initial energy}, \eqref{ansatz 2} and \eqref{equivalent energy 1}, we obtain
\beq\label{priori for Bsq}
\mathcal{E}_{N_0}(t)\lesssim 1+|\text{Re}(I)|+|\text{Re}(II)|+|\text{Re}(III)|+\epsilon t,
\eeq
where
\beq\label{quadratic  terms e}\begin{aligned}
&I\eqdefa\sum_{\mu\in\{+,-\}}\int_0^t\int_{\R^2\times\R^2}\langle\xi\rangle^{2N_0}s^\epsilon_{\mu,+}(\xi,\eta)\widehat{V^\mu}(\xi-\eta)\widehat{V^+}(\eta)
\overline{\widehat{V^+}(\xi)}d\eta d\xi dt,\\
&II\eqdefa\sum_{\mu\in\{+,-\}}\int_0^t\int_{\R^2\times\R^2}\langle\xi\rangle^{2N_0}q^\epsilon_{\mu,+}(\xi,\eta)\widehat{V^\mu}(\xi-\eta)\widehat{V^+}(\eta)
\overline{\widehat{V^+}(\xi)}d\eta d\xi dt,\\
&III\eqdefa\sum_{\mu\in\{+,-\}}\int_0^t\int_{\R^2\times\R^2}\langle\xi\rangle^{2N_0}q^\epsilon_{\mu,-}(\xi,\eta)\widehat{V^\mu}(\xi-\eta)\widehat{V^-}(\eta)
\overline{\widehat{V^+}(\xi)}d\eta d\xi dt.
\end{aligned}\eeq

By virtue of \eqref{bound of s eps} and \eqref{bound of q eps}, it is easy to get the energy estimate
\beno
\mathcal{E}_{N_0}(t)\lesssim 1+\sqrt\e t,
\eeno
which gives rise to the local existence of \eqref{Bsq2D}-\eqref{initial} on a finite time interval of scalar $\f{1}{\sqrt\epsilon}$. To enlarge the time scalar $\f{1}{\sqrt\epsilon}$, we shall use normal formal techniques to deal with the worst term $III$ that is involving the quadratic term $Q_{\mu,-}^\epsilon(V^\mu,V^-)$.

\medskip

{\bf Step 2. Estimate for $\text{Re}(I)$ and $\text{Re}(II)$.} In this step, we shall derive the following estimate
\beq\label{estimate for I and II}
|\text{Re}(I)|+|\text{Re}(II)|\lesssim\epsilon t.
\eeq

For $I$ , by the expression of $I$, we have
\beno
\bar{I}=\sum_{\mu\in\{+,-\}}\int_0^t\int_{\R^2\times\R^2}\langle\xi\rangle^{2N_0}\overline{s^\epsilon_{\mu,+}(\xi,\eta)}
\overline{\widehat{V^\mu}(\xi-\eta)}\overline{\widehat{V^+}(\eta)}\widehat{V^+}(\xi)d\eta d\xi dt,
\eeno
which along with \eqref{symbol s eps} and the fact that
\beno
\overline{\widehat{V^\mu}(\xi)}=\widehat{V^{-\mu}}(-\xi)
\eeno
shows
\beno\begin{aligned}
\bar{I}&=-\sum_{\mu\in\{+,-\}}\int_0^t\int_{\R^2\times\R^2}\langle\xi\rangle^{2N_0}s^\epsilon_{\mu,+}(\xi,\eta)
\widehat{V^{-\mu}}(\eta-\xi)\overline{\widehat{V^+}(\eta)}\widehat{V^+}(\xi)d\eta d\xi dt\\
&=-\sum_{\mu\in\{+,-\}}\int_0^t\int_{\R^2\times\R^2}\langle\eta\rangle^{2N_0}s^\epsilon_{\mu,+}(\eta,\xi)
\widehat{V^{-\mu}}(\xi-\eta)\widehat{V^+}(\eta)\overline{\widehat{V^+}(\xi)}d\eta d\xi dt.
\end{aligned}\eeno
Since $\text{Re}(I)=\f12(I+\bar{I})$, we have
\beno
\text{Re}(I)=\sum_{\mu\in\{+,-\}}\int_0^t\int_{\R^2\times\R^2}\tilde{s}^\epsilon_{\mu,+}(\xi,\eta)
\widehat{V^\mu}(\xi-\eta)\cdot\langle\eta\rangle^{N_0}\widehat{V^+}(\eta)\cdot\langle\xi\rangle^{N_0}
\overline{\widehat{V^+}(\xi)}d\eta d\xi dt,
\eeno
where
\beno
\tilde{s}^\epsilon_{\mu,+}(\xi,\eta)\eqdefa\f12\langle\xi\rangle^{-N_0}\langle\eta\rangle^{-N_0}\bigl(\langle\xi\rangle^{2N_0}s_{\mu,+}^\epsilon(\xi,\eta)
-\langle\eta\rangle^{2N_0}s_{-\mu,+}^\epsilon(\eta,\xi)\bigr).
\eeno

Thanks to \eqref{bound of s eps}, we have
\beno
|\tilde{s}^\epsilon_{\mu,+}(\xi,\eta)|\lesssim\epsilon|\xi-\eta|\cdot\varphi_{\leq -6}\Bigl(\f{|\xi-\eta|}{\max\{|\xi|,|\eta|\}}\Bigr).
\eeno
Then we have
\beno\begin{aligned}
|\text{Re}(I)|&\lesssim\epsilon\sum_{\mu\in\{+,-\}}\int_0^t\int_{\R^2\times\R^2}|\xi-\eta||\widehat{V^\mu}(\xi-\eta)|
\cdot\langle\eta\rangle^{N_0}|\widehat{V^+}(\eta)|\cdot\langle\xi\rangle^{N_0}|\widehat{V^+}(\xi)|d\eta d\xi dt\\
&\lesssim\epsilon t\sup_{\tau\in(0,t)}\bigl(\||\xi|\widehat{V^\mu}(\tau,\xi)\|_{L^1}\|\langle\xi\rangle^{N_0}\widehat{V^+}(\tau,\xi)\|_{L^2}^2\bigr)\\
&
\lesssim\epsilon t\sup_{\tau\in(0,t)}\bigl(\|V^\mu\|_{H^3}
\|V^+\|_{H^{N_0}}^2\bigr)
\lesssim\epsilon t\sup_{\tau\in(0,t)}\|V(\tau)\|_{H^{N_0}}^3.
\end{aligned}\eeno
By virtue of \eqref{equivalent energy 1} and \eqref{ansatz 2}, we obtain
\beq\label{estimate for I}
|\text{Re}(I)|\lesssim \epsilon t.
\eeq

Following a similar derivation as for \eqref{estimate for I}, using the second inequality of \eqref{bound of q eps}, we have
\beq\label{estimate for II}
|\text{Re}(II)|\lesssim|II|\lesssim \epsilon t.
\eeq

Due to \eqref{estimate for I} and \eqref{estimate for II}, we obtain \eqref{estimate for I and II}.

\medskip

{\bf Step 3. Estimate for $\text{Re}(III)$.} Due to the first inequality of \eqref{bound of q eps},  $III$ is of order $O(\sqrt\epsilon)$. In order to improve the bound of $III$, we shall apply a normal form technique to $III$. The aim of this step is to derive the following estimate
\beq\label{estimate for III}
|III|\lesssim 1+\epsilon^{\f23} t.
\eeq

{\it Step 3.1. The evolution equation and estimates of the profile.} We introduce the profiles $f$ and $g$ of $V$ and $\langle\na\rangle^{N_0}V$ as follows
\beno
f=e^{-it\Lambda_\epsilon}V\quad\text{and}\quad g=\langle\na\rangle^{N_0}f.
\eeno
Due to \eqref{equivalent energy 1}, we have
\beq\label{equivalent energy 2}
\mathcal{E}_{N_0}(t)\sim\|V\|_{H^{N_0}}^2\sim\|f\|_{H^{N_0}}^2=\|g\|_{L^2}^2.
\eeq

By virtue of \eqref{linearized system}, we have
\beq\label{evolution equation for profile}\begin{aligned}
&\p_tf=\epsilon e^{-it\Lambda_\epsilon}\bigl\{- \na\cdot(\z\vv v)+(1+\epsilon\D)\na\cdot\vv B^\epsilon\\
&\qquad
+i|D|^{-1}\div\bigl[-\f{1}{2}\na(|\vv v|^2)+\vv B^\epsilon(\p_t\z,\vv v)+\vv B^\epsilon(\z,\p_t\vv v)\bigr]\bigr\}
\end{aligned}\eeq
Then we have
\beno\begin{aligned}
\||D|^{-1}\p_tf\|_{H^{N_0}}&\lesssim\epsilon\bigl(\|\z\vv v\|_{H^{N_0}}+\|(1+\epsilon\D)\vv B^\epsilon\|_{H^{N_0}}
+\||\vv v|^2\|_{H^{N_0}}\\
&\qquad+\||D|^{-1}\bigl(\vv B^\epsilon(\p_t\z,\vv v)\bigr)\|_{H^{N_0}}
+\||D|^{-1}\bigl(\vv B^\epsilon(\z,\p_t\vv v)\bigr)\|_{H^{N_0}}\bigr).
\end{aligned}\eeno
By virtue of the definition of $\vv B^\epsilon(\cdot,\cdot)$,  we have
\beno
\text{supp}\, \widehat{\vv B^\epsilon(\cdot,\cdot)}(\xi)\subset\{\xi\in\R^2\,|\, \sqrt\epsilon|\xi|\geq 2^5\},
\eeno
which along with \eqref{Bsq2D}, \eqref{estimate for Beps a} and the definition of $\vv B^\epsilon(\cdot,\cdot)$ implies
\beno\begin{aligned}
&\|(1+\epsilon\D)\vv B^\epsilon\|_{H^{N_0}}\lesssim\|\z\|_{L^\infty}\|\vv v\|_{H^{N_0}},\\
&\||D|^{-1}\bigl(\vv B^\epsilon(\p_t\z,\vv v)\bigr)\|_{H^{N_0}}\lesssim\sqrt\epsilon\|\p_t\z\|_{L^\infty}\|\vv v\|_{H^{N_0}}\\
&\quad
\lesssim\sqrt\epsilon\bigl(\|\vv v\|_{H^{5}}+\epsilon\|\z\|_{H^3}\|\vv v\|_{H^3}\bigr)\|\vv v\|_{H^{N_0}}
\lesssim\bigl(1+\epsilon\|\z\|_{H^{N_0}}\bigr)\|\vv v\|_{H^{N_0}}^2,\\
&\||D|^{-1}\bigl(\vv B^\epsilon(\z,\p_t\vv v)\bigr)\|_{H^{N_0}}\lesssim\|\z\|_{L^\infty}\||D|^{-1}\p_t\vv v\|_{H^{N_0}}\\
&\quad
\lesssim\|\z\|_{H^2}\bigl(\|\z\|_{H^{N_0}}+\epsilon\|\vv v\|_{H^{N_0}}^2\bigr)
\lesssim\|\z\|_{H^{N_0}}^2+\epsilon\|\z\|_{H^{N_0}}\|\vv v\|_{H^{N_0}}^2\bigr).
\end{aligned}\eeno

Thanks to \eqref{ansatz 2}, we have
\beq\label{estimate for profile}
\||D|^{-1}\p_tf\|_{H^{N_0}}\lesssim\epsilon.
\eeq

{\it Step 3.2. The profile formulation of $III$.} We denote
\beno
\mathfrak{Q}^\epsilon_{\mu}(\xi,\eta)=\langle\xi\rangle^{2N_0}q^\epsilon_{\mu,-}(\xi,\eta)\widehat{V^\mu}(\xi-\eta)\widehat{V^-}(\eta)
\overline{\widehat{V^+}(\xi)}.
\eeno

Now we rewrite $\mathfrak{Q}^\epsilon_{\mu}(\xi,\eta)$ in terms of the profiles $f$ and $g$ as follows
\beno
\mathfrak{Q}^\epsilon_{\mu}(\xi,\eta)=e^{it\Phi^\epsilon_{\mu,-}(\xi,\eta)}\tilde{q}^\epsilon_{\mu,-}(\xi,\eta)\widehat{f^{\mu}}(\xi-\eta)\cdot
\widehat{g^-}(\eta)\cdot\widehat{g^-}(-\xi),
\eeno
where
\beno\begin{aligned}
&\Phi^\epsilon_{\mu,-}(\xi,\eta)=-\Lambda_\epsilon(\xi)+\mu\Lambda_\epsilon(\xi-\eta)-\Lambda_\epsilon(\eta),\quad\tilde{q}^\epsilon_{\mu,-}(\xi,\eta)=\langle\eta\rangle^{-N_0} \langle\xi\rangle^{N_0} q^\epsilon_{\mu,-}(\xi,\eta).
\end{aligned}\eeno
Thanks to \eqref{bound of q eps}, we have $\text{supp}\,(\tilde{q}^\epsilon_{\mu,-})=\text{supp}\,(q^\epsilon_{\mu,-})$ and
\beq\label{bound of tilde q eps}\begin{aligned}
&|\tilde{q}^\epsilon_{\mu,-}(\xi,\eta)|\lesssim\epsilon|\xi|\cdot\varphi_{\leq 5}\bigl(\sqrt\epsilon|\eta|\bigr)\cdot\varphi_{\leq -6}\Bigl(\f{|\xi-\eta|}{|\eta|}\Bigr).
\end{aligned}\eeq

Due to Lemma \ref{phase lemma} and \eqref{supp of q}, for any $(\xi,\eta)\in\text{supp}\,(q^\epsilon_{\mu,-})$, we have
\beq\label{phase mu -}
\Phi^\epsilon_{\mu,-}(\xi,\eta)\sim|\eta|\phi^\epsilon_{\mu,-}(\xi,\eta),
\eeq
where $\phi^\epsilon_{+,-}(\xi,\eta)$  and $\phi^\epsilon_{-,-}(\xi,\eta)$ are defined in \eqref{phi + -} and \eqref{phi - -}.

{\it Step 3.3. Estimate for $\int_0^t\int_{\R^2\times\R^2}\mathfrak{Q}^\epsilon_{+}(\xi,\eta)d\eta d\xi d\tau$.}
 Firstly, due to \eqref{supp of q}, we have  for any  $(\xi,\eta)\in\text{supp}\,(q^\epsilon_{+,-})$,
 \beno
 \sin\bigl(\angle(\xi,\eta)\bigr)\leq\f{|\xi-\eta|}{|\eta|}\leq\f{1}{32},
 \eeno
 which gives rise to
 \beq\label{angle between xi and eta}
\sin\bigl(\angle(\xi,\eta)\bigr) \approx \angle(\xi,\eta)\leq\f{1}{31},\quad1\approx\cos\bigl(\f12\angle(\xi,\eta)\bigr)\geq\f{31}{32}.
 \eeq

 We divide the integral regime into the following three parts:

 {\it (1) For low frequencies $\sqrt\epsilon|\eta|\leq\f12$,}
 using \eqref{phi + -}, \eqref{angle between xi and eta}, \eqref{bound of tilde q eps} and \eqref{phase mu -}, we have
\beno
\phi^\epsilon_{+,-}(\xi,\eta)\approx 4(\epsilon|\eta|^2-1),
\eeno
so that
\beq\label{B7}
|\phi^\epsilon_{+,-}(\xi,\eta)|\sim 1\quad\text{and}\quad
|\f{\tilde{q}^\epsilon_{+,-}(\xi,\eta)}{i\Phi^\epsilon_{+,-}(\xi,\eta)}|\lesssim\f{\epsilon}{|\phi^\epsilon_{+,-}(\xi,\eta)|}\lesssim \epsilon.
\eeq

Integrating by parts w.r.t t, we have
\beno\begin{aligned}
&\int_0^t\int_{\R^2\times\R^2}\mathfrak{Q}^\epsilon_{+}(\xi,\eta)\varphi_{\leq-2}(\sqrt\epsilon|\eta|)d\eta d\xi dt\\
&=\underbrace{\int_{\R^2\times\R^2}\f{\tilde{q}^\epsilon_{+,-}(\xi,\eta)}{i\Phi^\epsilon_{+,-}(\xi,\eta)}
e^{i\tau\Phi^\epsilon_{+,-}(\xi,\eta)}\widehat{f^+}(\tau,\xi-\eta)\cdot
\widehat{g^-}(\tau,\eta)\cdot\widehat{g^-}(\tau,-\xi)\varphi_{\leq-2}(\sqrt\epsilon|\eta|)d\eta d\xi}_{A_1}|_{\tau=0}^t\\
&\quad-\underbrace{\int_0^t\int_{\R^2\times\R^2}\f{\tilde{q}^\epsilon_{+,-}(\xi,\eta)}{i\Phi^\epsilon_{+,-}(\xi,\eta)}
e^{it\Phi^\epsilon_{+,-}(\xi,\eta)}\p_t\bigl(\widehat{f^+}(\xi-\eta)\cdot
\widehat{g^-}(\eta)\cdot\widehat{g^-}(-\xi)\bigr)\varphi_{\leq-2}(\sqrt\epsilon|\eta|)d\eta d\xi dt}_{A_2}.
\end{aligned}\eeno

For $A_1$, by virtue of \eqref{B7}, we have
\beno\begin{aligned}
&|A_1|\lesssim\epsilon\sup_{\tau\in[0,t]}\int_{\R^2\times\R^2}|\widehat{f^+}(\tau,\xi-\eta)|\cdot
|\widehat{g^-}(\tau,\eta)|\cdot|\widehat{g^-}(\tau,-\xi)|d\eta d\xi\\
&\quad\lesssim\epsilon\sup_{\tau\in[0,t]}\bigl(\|\widehat{f^+}(\tau,\cdot)\|_{L^1}\|\widehat{g^-}(\tau,\cdot)\|_{L^2}^2\bigr)
\lesssim\epsilon\sup_{\tau\in[0,t]}\bigl(\|f(\tau,\cdot)\|_{H^2}\|g(\tau,\cdot)\|_{L^2}^2\bigr),
\end{aligned}\eeno
which along with \eqref{ansatz 2} and \eqref{equivalent energy 2} implies
\beq\label{estimate for A 1}
|A_1|\lesssim\epsilon.
\eeq

For $A_2$, using \eqref{B7}, we have
\beno\begin{aligned}
&|A_2|\lesssim\epsilon t\sup_{\tau\in[0,t]}\int_{\text{supp}\,(q^\epsilon_{+,-})}\bigl(|\xi-\eta|\cdot|\f{1}{|\xi-\eta|}\p_t\widehat{f^+}(\tau,\xi-\eta)|\cdot
|\widehat{g^-}(\tau,\eta)|\cdot|\widehat{g^-}(\tau,-\xi)|\\
&\qquad+|\eta|\cdot|\widehat{f^+}(\tau,\xi-\eta)|\cdot
|\f{1}{|\eta|}\p_t\bigl(\widehat{g^-}(\tau,\eta)\widehat{g^-}(\tau,-\xi)\bigr)|\bigr)\cdot 1_{\sqrt\epsilon|\eta|\leq\f12}d\eta d\xi\\
&\quad\lesssim\sqrt\epsilon t\sup_{\tau\in[0,t]}\bigl(\|\f{1}{|\cdot|}\p_t\widehat{f^+}(\tau,\cdot)\|_{L^1}\|\widehat{g^-}(\tau,\cdot)\|_{L^2}^2
+\|\widehat{f^+}(\tau,\cdot)\|_{L^1}\|\widehat{g^-}(\tau,\cdot)\|_{L^2}\|\f{1}{|\cdot|}\p_t\widehat{g^-}(\tau,\cdot)\|_{L^2}\bigr)\\
&\quad\lesssim\sqrt\epsilon t\sup_{\tau\in[0,t]}\bigl(\|\f{1}{|\cdot|}\p_tf(\tau,\cdot)\|_{H^2}\|g(\tau,\cdot)\|_{L^2}^2
+\|f(\tau,\cdot)\|_{H^2}\|g(\tau,\cdot)\|_{L^2}\|\f{1}{|\cdot|}\p_tg(\tau,\cdot)\|_{L^2}\bigr),
\end{aligned}\eeno
where we used the facts that $\sqrt\epsilon|\eta|\leq\f{1}{2}$, $|\xi-\eta|\leq\f{1}{32}|\eta|$ and $|\xi|\sim|\eta|$ in the second inequality.
By virtue of \eqref{ansatz 2}, \eqref{equivalent energy 2} and \eqref{estimate for profile}, we obtain
\beno
|A_2|\lesssim\epsilon^{\f32} t,
\eeno
which along with \eqref{estimate for A 1} shows
\beq\label{estimate for Q + in low freq}
\bigl|\int_0^t\int_{R^2\times\R^2}\mathfrak{Q}^\epsilon_{+}(\xi,\eta)\varphi_{\leq-2}(\sqrt\epsilon|\eta|)d\eta d\xi dt\bigr|
\lesssim\epsilon+\epsilon^{\f32} t.
\eeq

{\it (2) For moderate frequencies with phase far away from the spatial resonance set,} i.e.,
\beno
\f14\leq\sqrt\epsilon|\eta|\leq64,\quad |\phi^\epsilon_{+,-}(\xi,\eta)|\geq 2^{-D-1},
\eeno
we have
\beno
|\f{\tilde{q}^\epsilon_{+,-}(\xi,\eta)}{i\Phi^\epsilon_{+,-}(\xi,\eta)}|\lesssim\f{\epsilon}{|\phi^\epsilon_{+,-}(\xi,\eta)|}\lesssim 2^D\epsilon.
\eeno
Here $D\in\N$ is a large number which will be determined later on.

Following similar a derivation as in \eqref{estimate for Q + in low freq}, we obtain
\beq\label{estimate for Q + in med freq with large modulation}
\bigl|\int_0^t\int_{\R^2\times\R^2}\mathfrak{Q}^\epsilon_{+}(\xi,\eta)\varphi_{[-1,5]}(\sqrt\epsilon|\eta|)
\varphi_{\geq-D}(\phi^\epsilon_{+,-}(\xi,\eta))d\eta d\xi dt\bigr|
\lesssim2^D\epsilon+2^D\epsilon^{\f32}t.
\eeq

{\it (3) For moderate frequencies with phase near the spatial resonance set,} i.e.,
\beno
\f14\leq\sqrt\epsilon|\eta|\leq64,\quad |\phi^\epsilon_{+,-}(\xi,\eta)|\leq 2^{-D},
\eeno
we shall split the integral regime into the following two parts
\beno
\f{|\xi-\eta|}{|\eta|}\leq 2^{-K+1}\quad\text{and}\quad\f{|\xi-\eta|}{|\eta|}\in[2^{-K},2^{-5}],
\eeno
where $K\in\N$ is a large number which will be determined later on. We use the smallness of $\angle(\xi,\eta)$ for the former part, while we use
the smallness of the symbol $\tilde{q}^\epsilon_{+,-}$ and the volume of the integral regime for the latter part.

{\it (i) For case $\f{|\xi-\eta|}{|\eta|}\leq 2^{-K+1}$,}
 using Sine theorem, we have
\beno
\sin\bigl(\angle(\xi,\eta)\bigl)=\sin\bigl(\angle(\xi-\eta,\xi)\bigr)\f{|\xi-\eta|}{|\eta|}\leq2^{-K+1},
\eeno
which along with \eqref{angle between xi and eta} gives rise to
\beq\label{angular 1}
2^l\sim\angle(\xi,\eta)\leq2^{-K+2},\quad\text{i.e., } l\leq-K+3.
\eeq

Now localizing the angle $\angle(\xi,\eta)$, we have
\beq\label{B15}\begin{aligned}
&\bigl|\int_0^t\int_{\R^2\times\R^2}\mathfrak{Q}^\epsilon_{+}(\xi,\eta)\varphi_{[-1,5]}(\sqrt\epsilon|\eta|)\cdot
\varphi_{\leq-D-1}(\phi^\epsilon_{+,-}(\xi,\eta))\cdot\varphi_{\leq-K}(\f{|\xi-\eta|}{|\eta|})\\
&\qquad\cdot\varphi_{k}(|\xi|)
\varphi_{k_1}(|\xi-\eta|)\varphi_{k_2}(|\eta|)\varphi_{l}(\angle(\xi,\eta))d\eta d\xi dt\bigr|\\
&\lesssim t\sup_{\tau\in[0,t]}\bigl(\|\widehat{g_k}\|_{L^2}\|T_k(f,g)\|_{L^2_\xi}\bigr),
\end{aligned}\eeq
where
\beno\begin{aligned}
T_k(f,g)&\eqdefa\int_{\R^2}e^{it\Phi^\epsilon_{+,-}(\xi,\eta)}\tilde{q}^\epsilon_{+,-}(\xi,\eta)\varphi_{[-1,5]}(\sqrt\epsilon|\eta|)\cdot
\varphi_{\leq-D-1}(\phi^\epsilon_{+,-}(\xi,\eta))\cdot\varphi_{\leq-K}(\f{|\xi-\eta|}{|\eta|})\\
&\qquad\cdot\widehat{f^{+}}(\xi-\eta)
\widehat{g^-}(\eta)\varphi_{k}(|\xi|)
\varphi_{k_1}(|\xi-\eta|)\varphi_{k_2}(|\eta|)\varphi_{l}(\angle(\xi,\eta))d\eta
\end{aligned}\eeno
Due to \eqref{bound of tilde q eps}, there holds $|\tilde{q}^\epsilon_{+,-}(\xi,\eta)|\lesssim\sqrt\epsilon$ and $|k-k_2|\leq2$.
 Then using the bilinear estimate
\eqref{biliear estimate}, we have
\beq\label{B8}
\|T_k(f,g)\|_{L^2_\xi}\lesssim \sqrt\epsilon2^{\f{l}{2}}\cdot2^{k_1}\|f_{k_1}\|_{L^2}\|g_{k_2}\|_{L^2}.
\eeq

Thanks to \eqref{angular 1}, \eqref{B15} and \eqref{B8}, we obtain
\beno\begin{aligned}
&\bigl|\int_0^t\int_{\R^2\times\R^2}\mathfrak{Q}^\epsilon_{+}(\xi,\eta)\varphi_{[-1,5]}(\sqrt\epsilon|\eta|)\cdot
\varphi_{\leq-D-1}(\phi^\epsilon_{+,-}(\xi,\eta))\cdot\varphi_{\leq-K}(\f{|\xi-\eta|}{|\eta|})\\
&\qquad\cdot\varphi_{k}(|\xi|)
\varphi_{k_1}(|\xi-\eta|)\varphi_{k_2}(|\eta|)d\eta d\xi dt\bigr|\\
&\lesssim \sqrt\epsilon2^{-\f{K}{2}}t\sup_{\tau\in[0,t]}\bigl(\|\na f_{k_1}\|_{L^2}\|g_k\|_{L^2}\|g_{k_2}\|_{L^2}\bigr),
\end{aligned}\eeno
which along with \eqref{ansatz 2} and \eqref{equivalent energy 2} implies
\beq\label{estimate for Q + in med 1}\begin{aligned}
&\bigl|\int_0^t\int_{\R^2\times\R^2}\mathfrak{Q}^\epsilon_{+}(\xi,\eta)\varphi_{[-1,5]}(\sqrt\epsilon|\eta|)\cdot
\varphi_{\leq-D-1}(\phi^\epsilon_{+,-}(\xi,\eta))\cdot\varphi_{\leq-K}\bigl(\f{|\xi-\eta|}{|\eta|}\bigr)d\eta d\xi dt\bigr|\\
&\lesssim \sqrt\epsilon2^{-\f{K}{2}}t\sup_{\tau\in[0,t]}\bigl(\|\na f\|_{L^2}\|g\|_{L^2}^2\bigr)\lesssim\sqrt\epsilon2^{-\f{K}{2}}t.
\end{aligned}\eeq

{\it (ii) For case $\f{|\xi-\eta|}{|\eta|}\in[2^{-K},2^{-5}]$,} using \eqref{bound of tilde q eps}, we have
\beq\label{B16}
|\tilde{q}^\epsilon_{+,-}(\xi,\eta)|\lesssim\epsilon 2^K|\xi-\eta|.
\eeq
We denote the integral regime in this case by
\beq\label{integral regime}
\mathbb{S}^\epsilon_{+}=\{(\xi,\eta)\in\text{supp}\,(q^\epsilon_{+,-})\,|\,|\phi^\epsilon_{+,-}(\xi,\eta)|\leq2^{-D},\quad
\f{|\xi-\eta|}{|\eta|}\in[2^{-K},2^{-5}]\}.
\eeq

Firstly, transforming the variables $(\xi,\eta)$ in polar variables $(r_\xi,\theta_\xi,r_\eta,\theta_\eta)$, using \eqref{B16}, we have
\beq\label{B11}\begin{aligned}
&\bigl|\int_0^t\int_{\mathbb{S}^\epsilon_{+}}\mathfrak{Q}^\epsilon_{+}(\xi,\eta)\varphi_{[-1,5]}(\sqrt\epsilon|\eta|)\cdot
\varphi_{\leq-D-1}(\phi^\epsilon_{+,-}(\xi,\eta))\cdot\varphi_{\geq-K+1}\bigl(\f{|\xi-\eta|}{|\eta|}\bigr)d\eta d\xi dt\bigr|\\
&\lesssim\epsilon 2^K t\sup_{\tau\in[0,t]}\int_{\mathbb{S}^\epsilon_{+}}
|\xi-\eta||\widehat{f^+}(\tau,\xi-\eta)|\cdot
|\widehat{g^-}(\tau,\eta)|\cdot|\widehat{g^-}(\tau,-\xi)|\cdot r_\xi r_\eta d\theta_\eta dr_\eta d\theta_\xi dr_\xi.
\end{aligned}\eeq
Here and in what follows, we use an abuse of  notations for the functions in different coordinates.

In order to use the volume of $\mathbb{S}^\epsilon_{+}$, we introduce the coordinates transformation
on $\mathbb{S}^\epsilon_{+}$ as follows:
\beno\begin{aligned}
\Psi_+:\,&\mathbb{S}^\epsilon_{+}\rightarrow\widetilde{\mathbb{S}}^\epsilon_{+}\subset\R^2\times\R^2,\\
&(r_\xi,\theta_\xi,r_\eta,\theta_\eta)\mapsto(r_\xi,\theta_\xi,\tilde{r}_\eta,\theta_\eta)
=(r_\xi,\theta_\xi,\phi^\epsilon_{+,-}(\xi,\eta),\theta_\eta),
\end{aligned}\eeno
we have
\beno
\det\Bigl(\f{\p\Psi_+(r_\xi,\theta_\xi,r_\eta,\theta_\eta)}{\p(r_\xi,\theta_\xi,r_\eta,\theta_\eta)}\Bigr)
=\f{\p\phi^\epsilon_{+,-}(\xi,\eta)}{\p r_\eta}.
\eeno
Thanks to the expression of $\phi^\epsilon_{+,-}(\xi,\eta)$ in \eqref{phi + -},
we have
\beno\begin{aligned}
&\f{\p\phi^\epsilon_{+,-}(\xi,\eta)}{\p r_\eta}=4\epsilon\cos^2\bigl(\f12\angle(\xi,\eta)\bigr)\bigl(2r_\eta-r_\xi\bigr)\\
&\qquad+\epsilon\bigl(4\cos^2\bigl(\f12\angle(\xi,\eta)\bigr)-3\bigr)\bigl[(r_\xi+r_\eta)\p_{r_\eta}|\xi-\eta|+\p_{r_\eta}|\xi-\eta|^2+|\xi-\eta|\bigr].
\end{aligned}\eeno

Since
\beno
|\xi-\eta|^2=r_\xi^2+r_\eta^2-2r_\xi r_\eta\cos\bigl(\angle(\xi,\eta)\bigr),
\eeno
we have
\beno
\p_{r_\eta}|\xi-\eta|^2=2\bigl(r_\eta-r_\xi\cos\bigl(\angle(\xi,\eta)\bigr)\bigr),
\quad\p_{r_\eta}|\xi-\eta|=\f{1}{|\xi-\eta|}\bigl(r_\eta-r_\xi\cos\bigl(\angle(\xi,\eta)\bigr)\bigr).
\eeno
Without loss of generality, we only consider $\angle(\xi,\eta)\neq0$. Since
\beno
r_\eta-r_\xi\cos\angle(\xi,\eta)=-|\xi-\eta|\cos\angle(\xi-\eta,\eta),
\eeno
we have
\beq\label{B17}
\p_{r_\eta}|\xi-\eta|^2=-2|\xi-\eta|\cos\angle(\xi-\eta,\eta),\quad\p_{r_\eta}|\xi-\eta|=-\cos\angle(\xi-\eta,\eta),
\eeq
which implies that
\beno\begin{aligned}
&\f{\p\phi^\epsilon_{+,-}(\xi,\eta)}{\p r_\eta}=4\epsilon\cos^2\bigl(\f12\angle(\xi,\eta)\bigr)\bigl(2r_\eta-r_\xi\bigr)\\
&\qquad
-\epsilon\bigl(4\cos^2\bigl(\f12\angle(\xi,\eta)\bigr)-3\bigr)\bigl[\cos\bigl(\angle(\xi-\eta,\eta)\bigr)(r_\xi+r_\eta+2|\xi-\eta|)
-|\xi-\eta|\bigr].
\end{aligned}\eeno

Using \eqref{angle between xi and eta}, the facts that $|\xi-\eta|\leq\f{1}{32}r_\eta$ and $r_\xi\approx r_\eta$, we have
\beq\label{B9}
\det\Bigl(\f{\p\Psi_+(r_\xi,\theta_\xi,r_\eta,\theta_\eta)}{\p(r_\xi,\theta_\xi,r_\eta,\theta_\eta)}\Bigr)
=\f{\p\phi^\epsilon_{+,-}(\xi,\eta)}{\p r_\eta}\approx4\epsilon r_\eta
-2\epsilon r_\eta\cos\bigl(\angle(\xi-\eta,\eta)\bigr)\sim\epsilon r_\eta,
\eeq
which along with the fact that $\sqrt\epsilon r_\eta\sim1$ yields
\beq\label{det of Jacobi}
\det\Bigl(\f{\p\Psi_+(r_\xi,\theta_\xi,r_\eta,\theta_\eta)}{\p(r_\xi,\theta_\xi,r_\eta,\theta_\eta)}\Bigr)\sim\sqrt\epsilon.
\eeq

Changing the variables $(r_\xi,\theta_\xi,r_\eta,\theta_\eta)$ to $(r_\xi,\theta_\xi,\tilde{r}_\eta,\theta_\eta)$, using \eqref{det of Jacobi}, we deduce from \eqref{B11} that
\beno\begin{aligned}
&\bigl|\int_0^t\int_{\mathbb{S}^\epsilon_{+}}\mathfrak{Q}^\epsilon_{+}(\xi,\eta)\varphi_{[-1,5]}(\sqrt\epsilon|\eta|)
\varphi_{\leq-D-1}(\phi^\epsilon_{+,-}(\xi,\eta))\cdot\varphi_{\geq-K+1}\bigl(\f{|\xi-\eta|}{|\eta|}\bigr)d\eta d\xi dt\bigr|\\
&\lesssim \sqrt\epsilon 2^Kt\sup_{\tau\in[0,t]}\int_{\f{31}{128\sqrt\epsilon}}^{\f{66}{\sqrt\epsilon}}\int_0^{2\pi}
\int_0^{2^{-D}}\int_0^{2\pi}
|\xi-\eta||\widehat{f^+}(\tau,\xi-\eta)|\\
&\quad\cdot
|\widehat{g^-}(\tau,\eta)|\cdot|\widehat{g^-}(\tau,-\xi)|\cdot 1_{\mathbb{S}^\epsilon_{+}}(\xi,\eta)\cdot r_\xi r_\eta d\theta_\eta d{\tilde{r}_\eta} d\theta_\xi dr_\xi\\
&\lesssim \sqrt\epsilon 2^Kt\sup_{\tau\in[0,t]}\Bigl\{\|g(\tau,\cdot)\|_{L^2}\cdot\Bigl(\int_{\f{31}{128\sqrt\epsilon}}^{\f{66}{\sqrt\epsilon}}\int_0^{2\pi}
\Bigl(\int_0^{2^{-D}}\int_0^{2\pi}
|\xi-\eta||\widehat{f^+}(\tau,\xi-\eta)|\\
&\quad\cdot
|\widehat{g^-}(\tau,\eta)|\cdot 1_{\mathbb{S}^\epsilon_{+}}(\xi,\eta)\cdot  r_\eta d\theta_\eta d{\tilde{r}_\eta}\Bigl)^2\cdot r_\xi d\theta_\xi dr_\xi\Bigr)^{\f12}\Bigr\}\\
&\lesssim \sqrt\epsilon 2^K 2^{-\f{D}{2}}t\sup_{\tau\in[0,t]}\Bigl\{\|g(\tau,\cdot)\|_{L^2}\cdot\Bigl(\int_{\f{31}{128\sqrt\epsilon}}^{\f{66}{\sqrt\epsilon}}\int_0^{2\pi}
\int_0^{2^{-D}}\int_0^{2\pi}
|\xi-\eta|^2|\widehat{f^+}(\tau,\xi-\eta)|^2\\
&\quad\cdot
|\widehat{g^-}(\tau,\eta)|^2\cdot 1_{\mathbb{S}^\epsilon_{+}}(\xi,\eta)\cdot  r_\eta^2 r_\xi d\theta_\eta d{\tilde{r}_\eta} d\theta_\xi dr_\xi\Bigr)^{\f12}\Bigr\}.
\end{aligned}\eeno
Changing variables $(r_\xi,\theta_\xi,\tilde{r}_\eta,\theta_\eta)$
to $(r_\xi,\theta_\xi,r_\eta,\theta_\eta)$, using \eqref{det of Jacobi} and the fact that $r_\eta=|\eta|\leq 2^K|\xi-\eta|$, we have
\beno\begin{aligned}
&\bigl|\int_0^t\int_{\mathbb{S}^\epsilon_{+}}\mathfrak{Q}^\epsilon_{+}(\xi,\eta)\varphi_{[-1,5]}(\sqrt\epsilon|\eta|)
\varphi_{\leq-D-1}(\phi^\epsilon_{+,-}(\xi,\eta))d\eta d\xi dt\bigr|\\
&\lesssim\epsilon^{\f34} 2^{\f{3K}{2}}2^{-\f{D}{2}}t\sup_{\tau\in[0,t]}\Bigl\{\|g(\tau,\cdot)\|_{L^2}\Bigl(\int_{\mathbb{S}^\epsilon_{+}}
|\xi-\eta|^3|\widehat{f^+}(\tau,\xi-\eta)|^2\cdot
|\widehat{g^-}(\tau,\eta)|^2\cdot r_\xi r_\eta d\theta_\eta dr_\eta d\theta_\xi dr_\xi\Bigr)^{\f12}\Bigl\}\\
&\lesssim\epsilon^{\f34} 2^{\f{3K}{2}}2^{-\f{D}{2}}t\sup_{\tau\in[0,t]}\bigl(\|f\|_{H^{\f32}}\|g\|_{L^2}^2\bigr),
\end{aligned}\eeno
which along with \eqref{ansatz 2} and \eqref{equivalent energy 2} implies that
\beq\label{estimate for Q + in med 2}\begin{aligned}
&\bigl|\int_0^t\int_{\mathbb{S}^\epsilon_{+}}\mathfrak{Q}^\epsilon_{+}(\xi,\eta)\varphi_{[-1,5]}(\sqrt\epsilon|\eta|)\cdot
\varphi_{\leq-D-1}(\phi^\epsilon_{+,-}(\xi,\eta))\cdot\varphi_{\geq-K+1}\bigl(\f{|\xi-\eta|}{|\eta|}\bigr)d\eta d\xi dt\bigr|\\
&\lesssim\epsilon^{\f34} 2^{\f{3K}{2}}2^{-\f{D}{2}}t.
\end{aligned}\eeq

Due to \eqref{estimate for Q + in med 1} and \eqref{estimate for Q + in med 2}, taking
\beno
\sqrt\epsilon2^{-\f{K}{2}}\sim\epsilon^{\f34} 2^{\f{3K}{2}}2^{-\f{D}{2}},\quad\text{i.e.,  } 2^K\sim2^{\f{D}{4}}\epsilon^{-\f18},
\eeno
we have
\beq\label{estimate for Q + in med freq with small modulation}
\bigl|\int_0^t\int_{\mathbb{S}^\epsilon_{+}}\mathfrak{Q}^\epsilon_{+}(\xi,\eta)\varphi_{[-1,5]}(\sqrt\epsilon|\eta|)
\varphi_{\leq-D-1}(\phi^\epsilon_{+,-}(\xi,\eta))d\eta d\xi dt\bigr|\lesssim2^{-\f{D}{8}}\epsilon^{\f{9}{16}}t.
\eeq

\medskip

Thanks to \eqref{estimate for Q + in med freq with large modulation} and \eqref{estimate for Q + in med freq with small modulation}, taking
\beno
2^D\epsilon^{\f32}\sim2^{-\f{D}{8}}\epsilon^{\f{9}{16}},\quad\text{i.e.,  } 2^D\sim\epsilon^{-\f56},
\eeno
we have
\beno
\bigl|\int_0^t\int_{\R^2\times\R^2}\mathfrak{Q}^\epsilon_{+}(\xi,\eta)\varphi_{[-1,5]}(\sqrt\epsilon|\eta|)d\eta d\xi dt\bigr|\lesssim1+\epsilon^{\f23}t,
\eeno
which along with \eqref{estimate for Q + in low freq} implies
\beq\label{estimate for Q +}
\bigl|\int_0^t\int_{\R^2\times\R^2}\mathfrak{Q}^\epsilon_{+}(\xi,\eta)d\eta d\xi dt\bigr|\lesssim1+\epsilon^{\f23}t.
\eeq

{\it Step 3.4. Estimate for $\int_0^t\int_{\R^2\times\R^2}\mathfrak{Q}^\epsilon_{-}(\xi,\eta)d\eta d\xi d\tau$.} Thanks to \eqref{phase - -} and \eqref{bound of tilde q eps}, we could derive the estimate for $\int_0^t\int_{\R^2\times\R^2}\mathfrak{Q}^\epsilon_{-}(\xi,\eta)d\eta d\xi d\tau$ in a similar way as that for $\int_0^t\int_{\R^2\times\R^2}\mathfrak{Q}^\epsilon_{+}(\xi,\eta)d\eta d\xi d\tau$. That is
\beq\label{estimate for Q -}
\bigl|\int_0^t\int_{\R^2\times\R^2}\mathfrak{Q}^\epsilon_{-}(\xi,\eta)d\eta d\xi dt\bigr|\lesssim1+\epsilon^{\f23}t.
\eeq
To archive \eqref{estimate for Q -}, we only need  to check that on
\beno
\mathbb{S}^\epsilon_{-}=\{(\xi,\eta)\in\text{supp}\,(q^\epsilon_{-,-})\,|\,|\phi^\epsilon_{-,-}(\xi,\eta)|\leq2^{-D},\quad
\f{|\xi-\eta|}{|\eta|}\in[2^{-K},2^{-5}]\},
\eeno
there exists a coordinates transformation $\Psi_-$ such that
\beq\label{det of Jacobi -}
\det\Bigl(\f{\p\Psi_-(r_\xi,\theta_\xi,r_\eta,\theta_\eta)}{\p(r_\xi,\theta_\xi,r_\eta,\theta_\eta)}\Bigr)\sim\sqrt\epsilon.
\eeq
Indeed,  introducing  the coordinates transformation  $\Psi_-$ as follows
\beno\begin{aligned}
\Psi_-:\,&\mathbb{S}^\epsilon_{-}\rightarrow\widetilde{\mathbb{S}}^\epsilon_{-}\subset\R^2\times\R^2,\\
&(r_\xi,\theta_\xi,r_\eta,\theta_\eta)\mapsto(r_\xi,\theta_\xi,\tilde{r}_\eta,\theta_\eta)
=(r_\xi,\theta_\xi,\phi^\epsilon_{-,-}(\xi,\eta),\theta_\eta),
\end{aligned}\eeno
we have
\beno
\det\Bigl(\f{\p\Psi_-(r_\xi,\theta_\xi,r_\eta,\theta_\eta)}{\p(r_\xi,\theta_\xi,r_\eta,\theta_\eta)}\Bigr)
=\p_{r_\eta}\phi_{-,-}^\epsilon(\xi,\eta).
\eeno

Using \eqref{phi - -}, we have
\beno\begin{aligned}
\p_{r_\eta}\phi_{-,-}^\epsilon(\xi,\eta)&=\epsilon\bigl(2r_\eta-r_\xi\bigr)+\epsilon\bigl(\f{3}{4\cos^2\bigl(\f12\angle(\xi,\eta)\bigr)}-1\bigr)
\bigl[(r_\xi+r_\eta)\p_{r_\eta}|\xi-\eta|+|\xi-\eta|-\p_{r_\eta}|\xi-\eta|^2\bigr],
\end{aligned}\eeno
which along with \eqref{B17} implies
\beno\begin{aligned}
\p_{r_\eta}\phi_{-,-}^\epsilon(\xi,\eta)&=\epsilon\bigl(2r_\eta-r_\xi\bigr)+\epsilon\bigl(1-\f{3}{4\cos^2\bigl(\f12\angle(\xi,\eta)\bigr)}\bigr)\\
&\qquad\times
\bigl[(r_\xi+r_\eta-2|\xi-\eta|)\cos\bigl(\angle(\xi-\eta,\eta)\bigr)-
|\xi-\eta|\bigr].
\end{aligned}\eeno

By virtue of \eqref{angle between xi and eta}, using the facts that $|\xi-\eta|\leq\f{1}{32}|\eta|$ and $|\xi|\approx|\eta|$, we have
\beno
\p_{r_\eta}\phi_{-,-}^\epsilon(\xi,\eta)\approx\epsilon r_\eta \bigl[1+\f12\cos\bigl(\angle(\xi-\eta,\eta)\bigr)\bigr]\sim\epsilon r_\eta,
\eeno
which along with the fact that $\sqrt\epsilon |\eta|\sim1$ implies
\beno\label{B18}
\det\Bigl(\f{\p\Psi_-(r_\xi,\theta_\xi,r_\eta,\theta_\eta)}{\p(r_\xi,\theta_\xi,r_\eta,\theta_\eta)}\Bigr)
=\p_{r_\eta}\phi_{-,-}^\epsilon(\xi,\eta)\sim\sqrt\epsilon,\quad\text{for any}\,\,(\xi,\eta)\in\mathbb{S}^\epsilon_{-}.
\eeno
This is exactly \eqref{det of Jacobi -}.

{\it Step 3.5. Estimate for $III$.} Thanks to \eqref{estimate for Q +} and \eqref{estimate for Q -}, we obtain \eqref{estimate for III}.

\medskip

{\bf Step 4. The a priori energy estimate.}  Combining the estimates \eqref{priori for Bsq}, \eqref{estimate for I and II}, \eqref{estimate for III}, we arrive at the final energy estimate \eqref{priori energy}. This completes the proof of the proposition.
\end{proof}

\begin{remark}
A variant of the long time issue considered in this paper would be to look for the lifespan of solutions of
\beq\label{Bsq2Dbis}\left\{\begin{aligned}
&\p_t\z+(1+\D)\na\cdot\vv v+\na\cdot(\z\vv v)=0,\\
&\p_t\vv v+(1+\D)\na\z+\f{1}{2}\na\bigl(|\vv v|^2\bigr)=\vv 0,
\end{aligned}\right.\eeq
with the initial data
\beq\label{initial a}
\z|_{t=0}=\z_0=O(\epsilon),\quad \vv v|_{t=0}=\vv v_0=O(\epsilon).
\eeq

This issue was studied in \cite{SX2} in the one-dimensional case and the lifespan was proven to be $O(1/\epsilon^{4/3}),$ improving the $O(1/\epsilon)$ result obtains par pure dispersive methods. By adapting the method in \cite{SX2} to the two-dimensional case one can obtain the same  $O(1/\epsilon^{4/3})$ result for \eqref{Bsq2Dbis}.

\end{remark}

\vspace{0.5cm}
\noindent {\bf Acknowledgments.}  The work of the first  author was partially  supported by the ANR project ANuI (ANR-17-CE40-0035-02).
The work
of the second author was partially supported by NSF of China under grants 11671383.

\end{document}